\documentclass[reqno, 12pt]{article}

\pdfoutput=1

\usepackage{enumerate}
\usepackage{latexsym}
\usepackage[centertags]{amsmath}
\usepackage{amsfonts}
\usepackage{amssymb}
\usepackage{amsthm}
\usepackage{newlfont}
\usepackage{graphics}
\usepackage{color}
\usepackage{float}
\usepackage{diagbox}
\usepackage{hyperref}
\usepackage{longtable}
\usepackage{rotating}
\usepackage{multirow}
\usepackage{extarrows}
\usepackage[sort,compress,numbers]{natbib}
\usepackage[utf8]{inputenc}

\numberwithin{equation}{subsection}
\newtheorem{thm}{Theorem}[section]
\newtheorem{cor}[thm]{Corollary}
\newtheorem{lem}[thm]{Lemma}
\newtheorem{prop}[thm]{Proposition}

\theoremstyle{mydefinition}
\newtheorem{dfn}[thm]{Definition}
\theoremstyle{myremark}

\allowdisplaybreaks[4]

\DeclareMathOperator{\Cot}{Cot}
\DeclareMathOperator{\Tan}{Tan}
\DeclareMathOperator{\Csc}{Csc}
\DeclareMathOperator{\Sec}{Sec}

\def\CT{\mathop{\mathrm{CT}}}

\def\Res{\mathop{\mathrm{Res}}}

\title{Root-of-unity weighted trigonometric power sums: a constant term approach}

\author{Jiahang Liu$^{1}$ and Guoce Xin$^{2,*}$\\
\small $^{1,2}$School of Mathematical Sciences, Capital Normal University, Beijing 100048, P.R.\ China\\
\small $^{1}$\texttt{jiahang-liu@foxmail.com}; $^{2}$\texttt{guoce\_xin@163.com}}

\date{\today}

\begin{document}
\maketitle

\begin{abstract}
We present a unified method for evaluating finite sums of powers of cotangent, tangent,
cosecant and secant weighted by primitive $k$th roots of unity.
The approach relies on constant term extraction for iterated Laurent series, combined
with generating functions and partial fraction decomposition.
We obtain explicit closed-form expressions for all even-power sums of the four functions,
as well as for odd-power cotangent and tangent sums.
The formulas are given in terms of Bernoulli polynomials, Euler polynomials,
and universal coefficients $r_{n,t}$.
As applications, we recover numerous classical identities---including the ordinary and
alternating cotangent power sums and Acton's alternating tangent sum---in a systematic
and elementary way.
\end{abstract}

\medskip
\noindent\textbf{Keywords:}  constant term method; Laurent series; trigonometric sum; Bernoulli polynomial; Euler polynomial.

\noindent
\begin{small}
 \emph{Mathematics subject classification}: Primary 11L03; Secondary 05A15.
\end{small}

%

\section{Introduction}
\label{sec:intro}

Finite power sums of trigonometric functions weighted by roots of unity lie at the intersection
of analytic number theory, combinatorics and the theory of modular forms.  They appear in the
study of Dedekind sums, special values of Dirichlet $L$-functions, and lattice sums, and have
been investigated by many authors \cite{Berndt2002,Cvijovic2009,franke2021}.

Classical approaches to such sums often rely on contour integration, generating functions or
properties of fractional part functions.  For instance, Girstmair~\cite{girstmair1987} expressed
cotangent sums via fractional parts and used them to evaluate Dirichlet series, while Bettin and
Conrey~\cite{BettinConrey2013} employed harmonic analysis and recurrence relations.  Cvijovi{\'c}
\cite{Cvijovic2012} systematically applied partial fraction decompositions of trigonometric
functions to derive a dozen explicit identities, but his method required separate analyses for
each parity and involved cumbersome manipulations of poles.  Acton~\cite{AMM4632} posed a
classical alternating tangent sum that was originally solved by contour integration; more
recently, Dowker~\cite{Dowker1987} introduced a class of ``twisted'' trigonometric sums with
roots of unity that motivate the present work.

A common feature of many existing closed forms is their reliance on nested convolutions or
iterated sums, which often obscure the structural simplicity of the results.  Moreover, while
even-power sums of the four basic trigonometric functions (cotangent, tangent, cosecant, secant)
can be handled by elementary identities, their odd-power counterparts are far less understood.

In this paper we propose a unified \emph{constant term} approach that provides explicit,
conceptually transparent closed forms for all even-power sums and for odd-power sums of cotangent
and tangent, weighted by primitive $k$-th roots of unity $\zeta_k=e^{2\pi i/k}$.  Our starting
point is the introduction of a real parameter $\theta$ that allows us to treat singularities in a
natural way:
\[
\Cot(a,n,k;\theta)=\sum_{\substack{j=0\\ j\neq j_0}}^{k-1}\zeta_k^{aj}\cot^{\,n}\Bigl(\theta+\frac{\pi j}{k}\Bigr),
\]
and analogously for tangent, cosecant and secant sums ($\Tan$, $\Csc$, $\Sec$).  The additional
index $j_0$ marks the unique pole of the summand, which exists for certain values of $\theta$
and is essential for a clean description when $\theta=0$.

Our main tool is the constant term operator on iterated Laurent series, a technique developed by
the first named author in \cite{Xin2004} and further refined in \cite{Xin2015euclid}.  Instead of
working with trigonometric identities, we use Euler's formula to convert the sum into a rational
function evaluation and then systematically extract the constant term via a change of variables
and partial fraction decomposition (Propositions~\ref{prop:change-var} and~\ref{prop:Cot-CT}).
This method not only circumvents the need for separate parity-based arguments but also reveals a
direct link with Bernoulli and Euler polynomials.

The main results of the paper are as follows:
\begin{itemize}
    \item \emph{Cotangent sums.}  For every integer $n\ge1$, the even- and odd-power sums
    $\Cot(a,2n,k)$ and $\Cot(a,2n+1,k)$ are expressed in terms of Bernoulli polynomials
    $B_m(a/k)$ and the universal coefficients $r_{n,t}$ (Theorems~\ref{thm:cot-even}
    and~\ref{thm:cot-odd}).  The cosine- and sine-weighted sums follow immediately as
    corollaries.
    \item \emph{Tangent sums.}  The formulas for $\Tan(a,n,k)$ exhibit a parity dependence on
    $k$.  When $k$ is even, $\Tan$ reduces directly to a cotangent sum; when $k$ is odd, it
    involves Euler polynomials $E_m(a/k)$ (Theorem~\ref{thm:tan}).  This provides the first
    complete description of root-of-unity weighted tangent power sums.
    \item \emph{Cosecant and secant sums.}  For even powers, the identities $\csc^2x=1+\cot^2x$
    and $\sec^2x=1+\tan^2x$ together with the binomial theorem give
    $\Csc(a,2n,k;\theta)$ and $\Sec(a,2n,k;\theta)$ as explicit linear combinations of the
    corresponding cotangent and tangent sums (Theorems~\ref{thm:csc} and~\ref{thm:sec}).
\end{itemize}

The generality of our formulas is demonstrated by a large collection of explicit corollaries: we
recover, often in a more compact form, classical results such as the ordinary and alternating
cotangent sums, the weighted cosecant square sum, and Acton's alternating tangent sum
\cite{AMM4632}.  Many of these identities were previously obtained by different ad hoc methods;
our constant term framework unifies them under a single algebraic principle.

The rest of the paper is organized as follows.  Section~\ref{sec:prelim} collects the necessary preliminaries
on Bernoulli and Euler polynomials, the coefficients $r_{n,t}$, and the constant term operators.
In Section~\ref{sec:cot} we prove the closed forms for weighted cotangent power sums.
Section~\ref{sec:apps} extends the method to tangent, cosecant and secant sums, and presents
numerous applications.


\section{Preliminaries}
\label{sec:prelim}

\subsection{Bernoulli polynomials and related numbers}
\label{subsec:Bernoulli}

The Bernoulli polynomials of order $r$ are defined by the generating function
\begin{equation}\label{e-Ber-Poly-r}
       \left(\frac{x}{e^x - 1}\right)^{\!\!r} e^{tx} = \sum_{m \ge 0} B_m^{(r)}(t) \frac{x^m}{m!}.
\end{equation}
When $r=1$, this reduces to the ordinary Bernoulli polynomials $B_m(x)$, and when $x$ is also taken as $0$,
it gives the Bernoulli numbers $B_m = B_m(0)$, i.e.,
\begin{equation}\label{e-Ber-Num}
\frac{x}{e^x-1}=\sum_{m\geq0}B_m \frac{x^m}{m!}.
\end{equation}
All odd Bernoulli numbers with $m \ge 3$ vanish identically.

For our later computations we will frequently use the one-parameter generating function
\begin{equation}\label{e-Ber-poly}
\frac{x\,e^{tx}}{e^x-1} = \sum_{m=0}^\infty B_m(t)\frac{x^m}{m!}.
\end{equation}

The expansion
\[
\frac{e^s+1}{e^s-1} = \coth\frac{s}{2}
= \frac{2}{s} + \sum_{m\ge 1}\frac{2B_{2m}}{(2m)!} s^{2m-1}
\]
will be needed. We introduce the coefficients $r_{n,t}$.

\begin{dfn}
For nonnegative integers $n,t$, let $r_{n,t}$ be defined by the generating function
\begin{equation}\label{e-cot-expansion}
\left(\frac{e^s+1}{e^s-1} \right)^{n}
=\frac{2^{n}}{s^{n}}\Bigl(1+\sum_{m\geq 1} \frac{B_{2m} s^{2m}}{(2m)!}\Bigr)^{n}
=2^{n}\sum_{t\geq 0} r_{n,t}\,s^{2t-n}.
\end{equation}
An explicit formula for $r_{n,t}$ is
\[
r_{n,t} = \sum_{\substack{c_1+2c_2+\cdots+lc_l = t \\ c_i \in \mathbb{N}}}
\binom{n}{c_1,c_2,\dots,c_l,\,n-(c_1+\cdots+c_l)}
\prod_{m=1}^{l} \left( \frac{B_{2m}}{(2m)!} \right)^{c_m}.
\]
\end{dfn}
The coefficients $r_{n,t}$ can also be expressed in terms of higher-order Bernoulli polynomials;
Table~\ref{tab:r-coeff} lists the verified values for \(1\le n\le 5\) and $0\leq 2t \leq n$.

\begin{table}[htbp]
\centering
\caption{Verified values of the coefficients \(r_{n,t}\)}
\label{tab:r-coeff}
\renewcommand{\arraystretch}{1.8}
\begin{tabular}{c|ccccc}
\hline
\(t \setminus n\) & \(n=1\) & \(n=2\) & \(n=3\) & \(n=4\) & \(n=5\) \\
\hline
\(t=0\) & \(1\) & \(1\) & \(1\) & \(1\) & \(1\) \\[2pt]
\(t=1\) & --- & \(\dfrac{1}{6}\) & \(\dfrac{1}{4}\) & \(\dfrac{1}{3}\) & \(\dfrac{5}{12}\) \\[2pt]
\(t=2\) & --- & --- & --- & \(\dfrac{13}{360}\) & \(\dfrac{1}{16}\) \\
\hline
\end{tabular}
\end{table}

We also introduce the Euler polynomials $E_m(x)$, defined by
\begin{equation}\label{e-Euler-poly}
\frac{2 e^{xt}}{e^t + 1} = \sum_{m=0}^{\infty} \frac{E_m(x)}{m!} t^m.
\end{equation}

\subsection{Constant term notation}
\label{subsec:CT}

In this subsection we collect several constant term notations needed in our discussion.

A formal Laurent series over a ring $R$,
\[
L(\lambda) = \sum_{n\in \mathbb{Z}} c_n \lambda^n, \qquad c_n \in R,
\]
is called a Laurent series if there exists an integer $N$ such that $c_n=0$ for all $n<N$.
The constant term operator $\CT_\lambda$ extracts the coefficient $c_0$, while the residue operator $\Res_\lambda$ extracts $c_{-1}$:
\[
\CT_\lambda L(\lambda)=c_0, \qquad \Res_\lambda L(\lambda)= c_{-1}.
\]
Although the two operators can be transformed into each other, we find the constant term more natural.

A rational function $E(\lambda)$ may admit several series expansions, so when applying $\CT_\lambda$ one must specify its expansion.
For example, expanding $E(\lambda)$ at $\lambda=u$ as
\[
E(\lambda)= \sum_{n=N}^{\infty} c_n (\lambda-u)^n,
\]
its residue at $\lambda=u$ is $\Res_{\lambda=u} E(\lambda)=c_{-1}$. The analogous constant term is defined differently.

Consider the partial fraction decomposition (PFD) of a rational function $E(\lambda)$:
\[
E(\lambda) = \frac{1}{(1-u \lambda)^a} F(\lambda) = \frac{A(\lambda)}{(1-u \lambda)^a} + \text{other terms},
\]
where $u$ is independent of $\lambda$, $F(u^{-1})$ exists, and $A(\lambda)$ is a polynomial in $\lambda$ of degree less than $a$.
Then the constant term of $E(\lambda)$ at $u^{-1}$ is defined as
\[
\CT_{\lambda=u^{-1}} E(\lambda) = A(0).
\]
In particular, when $a=1$, $A(\lambda)$ is the constant $F(u^{-1})$.

This notation originated in the computation of the Ehrhart series of the Birkhoff polytope; its connection with residues
was found in \cite{XinZhang2024}.

\begin{lem}\label{lem:CT-res}
Let $E(\lambda)$ be as above. Then
\[
\CT_{\lambda=u^{-1}}\bigl(\lambda E(\lambda)\bigr)=-\Res_{\lambda=u^{-1}} E(\lambda).
\]
\end{lem}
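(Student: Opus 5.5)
The plan is to reduce both sides to the same coefficient of the polynomial $A(\lambda)$ in the partial fraction decomposition of $E$. Write
\[
E(\lambda)=\frac{A(\lambda)}{(1-u\lambda)^a}+G(\lambda),\qquad A(\lambda)=\sum_{i=0}^{a-1}\alpha_i\lambda^i,
\]
where $G$ collects the other terms and is regular at $\lambda=u^{-1}$. We assume $u\neq0$, which is implicit since the point $u^{-1}$ must make sense. Both operators in the statement see only the polar part at $u^{-1}$. For $\Res_{\lambda=u^{-1}}$ this holds because $G$ is analytic there. For $\CT_{\lambda=u^{-1}}$ it holds by definition, once we identify the PFD term of $\lambda E(\lambda)$ with denominator $(1-u\lambda)^a$.

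\textbf{The constant term side.} Multiplying by $\lambda$ gives $\lambda E(\lambda)=\lambda A(\lambda)/(1-u\lambda)^a+\lambda G(\lambda)$. Here $\lambda G$ is still regular at $u^{-1}$, but $\lambda A(\lambda)$ can have degree $a$, so it is no longer a valid numerator. I would perform one division step:
\[
\lambda A(\lambda)=c\,(1-u\lambda)^a+A'(\lambda),\qquad \deg A'<a .
\]
Comparing coefficients of $\lambda^a$ forces $c=\alpha_{a-1}/(-u)^a$. Then
\[
\lambda E(\lambda)=\frac{A'(\lambda)}{(1-u\lambda)^a}+\bigl(c+\lambda G(\lambda)\bigr),
\]
and the bracket is regular at $u^{-1}$. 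So this is the required decomposition, and $\CT_{\lambda=u^{-1}}(\lambda E)=A'(0)$. Evaluating the division identity at $\lambda=0$ gives $0=c+A'(0)$, hence the constant term equals $-c$.

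\textbf{The residue side.} Rewrite the polar part as
\[
\frac{A(\lambda)}{(1-u\lambda)^a}=\frac{A(\lambda)}{(-u)^a(\lambda-u^{-1})^a}.
\]
The residue is then $(-u)^{-a}$ times the coefficient of $(\lambda-u^{-1})^{a-1}$ in the Taylor expansion of $A$ about $u^{-1}$. Since $\deg A\le a-1$, that coefficient is exactly the leading coefficient $\alpha_{a-1}$. So $\Res_{\lambda=u^{-1}}E=\alpha_{a-1}/(-u)^a=c$, and comparing with the previous paragraph gives the claimed identity.

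\textbf{Main obstacle.} The only delicate step is recognising that $\lambda A(\lambda)$ may violate the degree condition in the definition of $\CT_{\lambda=u^{-1}}$. The surplus $c(1-u\lambda)^a$ must be moved into the regular part, and this single division step is what produces the minus sign. The case $a=1$ is a useful sanity check: the constant term is $A'(0)=-c=-\alpha_0/(-u)=\alpha_0/u$, and the residue is $-\alpha_0/u$.
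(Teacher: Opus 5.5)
Your proof is correct. The paper itself gives no proof of this lemma; it only attributes the link between $\CT_{\lambda=u^{-1}}$ and residues to the work of Xin and Zhang. Your computation therefore supplies a self-contained verification. It is purely algebraic (one polynomial division and a Taylor expansion of a polynomial), so it holds over any field in which $u\neq 0$. That covers the paper's later use of the lemma with $u$ a formal parameter.

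The one thing you use tacitly is uniqueness of a decomposition $\lambda E=A'(\lambda)/(1-u\lambda)^a+H(\lambda)$ with $\deg A'<a$ and $H$ regular at $u^{-1}$. It holds because two such numerators differ by a polynomial of degree $<a$ that is divisible by $(1-u\lambda)^a$, hence they agree. This uniqueness is what lets you read off $\CT_{\lambda=u^{-1}}(\lambda E)=A'(0)$. Your argument never needs $\alpha_{a-1}\neq 0$, so the case where the true pole order is below $a$ is covered as well.

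A slightly shorter route, in the spirit of the paper's second proof of Proposition~\ref{prop:change-var}, uses linearity on the basis $(1-u\lambda)^{-j}$, $j\ge 1$; both sides vanish on terms regular at $u^{-1}$. Writing $\lambda=u^{-1}\bigl(1-(1-u\lambda)\bigr)$ gives $\CT_{\lambda=u^{-1}}\lambda(1-u\lambda)^{-j}=u^{-1}\delta_{j,1}$. Here the second piece $-u^{-1}(1-u\lambda)^{1-j}$ contributes $-u^{-1}$ when $j\ge 2$ and nothing when $j=1$. On the other side, $\Res_{\lambda=u^{-1}}(1-u\lambda)^{-j}=-u^{-1}\delta_{j,1}$. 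That version makes visible that only the simple-pole component matters. Yours reaches the same conclusion by identifying both sides with the single quantity $\alpha_{a-1}/(-u)^a$.
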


Throughout the paper, $\zeta_a = e^{2\pi i / a}$ denotes the primitive $a$-th root of unity.

The next constant term notation does not have a residue counterpart in the literature.
Consider the variational PFD
\[
E(\lambda) = \frac{1}{1-u \lambda^a} F(\lambda) = \frac{A(\lambda)}{1-u \lambda^a} + \text{other terms},
\]
with the same conditions except that we require $F(u^{-1/a} \zeta_a^j)$ to exist for all $j$.
Then we define
\[
\CT_{\lambda} \frac{1}{\underline{1-u \lambda^a}} F(\lambda) =A(0).
\]

It is easy to see that
\[
A(0) = \sum_{j=0}^{a-1} \CT_{\lambda=u^{-1/a} \zeta_a^j} E(\lambda)
      = \sum_{j=0}^{a-1} \frac{1-u^{1/a} \zeta_a^{-j} \lambda}{1-u \lambda^a} F(\lambda)\Big|_{\lambda=u^{-1/a} \zeta_a^j}.
\]
Using L'H\^opital's rule gives the following basic result.

\begin{lem}\label{lem:underlined-CT}
Let $F(\lambda)$ be a rational function such that $F(u^{-1/a}\zeta_a^j)$ exists for all $j=0,1,\dots,a-1$. Then
\[
\frac{1}{a}\sum_{k=0}^{a-1} F(u^{-1/a}\zeta_a^k) = \CT_\lambda \underline{\frac{1}{1-u\lambda^a}} F(\lambda).
\]
\end{lem}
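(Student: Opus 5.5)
The plan is to start from the identity displayed just before the lemma,
\[
\CT_\lambda \underline{\frac{1}{1-u\lambda^a}} F(\lambda)=A(0)=\sum_{k=0}^{a-1}\frac{1-u^{1/a}\zeta_a^{-k}\lambda}{1-u\lambda^a}F(\lambda)\Big|_{\lambda=u^{-1/a}\zeta_a^k},
\]
and evaluate each summand as a limit. First I would justify this identity itself. The poles $\lambda_k=u^{-1/a}\zeta_a^k$ of $1/(1-u\lambda^a)$ are simple and distinct, because $1-u\lambda^a=\prod_k(1-u^{1/a}\zeta_a^{-k}\lambda)$. Moreover $F$ is regular at every $\lambda_k$ by hypothesis. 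Hence, in the partial fraction decomposition, the part $A(\lambda)/(1-u\lambda^a)$ with $\deg A<a$ splits further as $\sum_k c_k/(1-u^{1/a}\zeta_a^{-k}\lambda)$. Each $c_k$ equals $\CT_{\lambda=\lambda_k}E(\lambda)$, as defined in the case $a=1$ of the previous construction.

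Next, putting $\lambda=0$ in this decomposition gives $A(0)=\sum_k c_k$. Each $c_k$ is computed by multiplying $E$ by the linear factor and letting $\lambda\to\lambda_k$, which gives the displayed formula.

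Then I would evaluate each limit with L'H\^opital's rule in the form
\[
\lim_{\lambda\to\lambda_k}\frac{1-u^{1/a}\zeta_a^{-k}\lambda}{1-u\lambda^a}
=\frac{-u^{1/a}\zeta_a^{-k}}{-au\lambda_k^{a-1}}.
\]
Since $\lambda_k^{a-1}=\lambda_k^a/\lambda_k=u^{-1}\cdot u^{1/a}\zeta_a^{-k}$, this ratio equals $1/a$. Therefore $c_k=F(\lambda_k)/a$, and summing over $k$ yields the lemma.

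The only delicate step is the justification that $A(0)$ is the sum of the individual constant terms. This needs the distinctness of the roots and the regularity of $F$ at them, which together ensure the factors of $1-u\lambda^a$ are coprime to the rest of the denominator. Both conditions are given in the hypotheses, so I expect no real obstacle beyond bookkeeping.
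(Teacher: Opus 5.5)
Your proposal is correct and follows the paper's own argument. The paper likewise writes $A(0)$ as the sum of the simple-pole constant terms $\CT_{\lambda=u^{-1/a}\zeta_a^j}E(\lambda)$, which it says is easy to see, and then uses L'H\^opital's rule to obtain the factor $1/a$; your partial-fraction justification of that first identity, using the distinct linear factors of $1-u\lambda^a$ and the regularity of $F$ at the roots, just supplies the detail the paper leaves out.
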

This identity and its combinatorial interpretations, including its connection with simplicial cones, are discussed in detail in \cite{XinX25}.


We conclude this subsection with the following special change-of-variable formula.

\begin{prop}\label{prop:change-var}
Suppose $E(\lambda)$ is a rational function with a pole at $\lambda=u$.
Then
\[
\CT_{\lambda=u} E(\lambda) = -\CT_{s} s\,E(u e^s).
\]
\end{prop}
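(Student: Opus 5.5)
The plan is to route the statement through Lemma~\ref{lem:CT-res} and the invariance of residues under an analytic change of variables. Throughout we assume $u\neq 0$, which is implicit: the definition of $\CT_{\lambda=u}$ uses a factor $(1-\lambda/u)^a$, i.e.\ the earlier definition with $u$ replaced by $u^{-1}$.

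First, we rewrite the left side as a residue. Put $G(\lambda)=E(\lambda)/\lambda$. Since $1/\lambda$ is regular and nonzero at $\lambda=u$, $G$ has the same pole order at $u$ as $E$. So $G$ is again of the form $F(\lambda)/(1-\lambda/u)^a$ with $F$ regular at $u$. Applying Lemma~\ref{lem:CT-res} to $G$ gives
\[
\CT_{\lambda=u} E(\lambda)=\CT_{\lambda=u}\bigl(\lambda G(\lambda)\bigr)=-\Res_{\lambda=u}\frac{E(\lambda)}{\lambda}.
\]

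Second, we change variables. The map $s\mapsto \lambda=ue^s$ is analytic and locally biholomorphic near $s=0$, sends $0$ to $u$, and has $d\lambda=ue^s\,ds$. Residues of meromorphic differentials are invariant under such substitutions, so
\[
\Res_{\lambda=u}\frac{E(\lambda)}{\lambda}\,d\lambda=\Res_{s=0}\frac{E(ue^s)}{ue^s}\,ue^s\,ds=\Res_{s=0}E(ue^s).
\]
Here $E(ue^s)$ is a genuine Laurent series in $s$, because $ue^s-u=us(1+O(s))$ makes the pole at $s=0$ of finite order. Finally $\Res_{s}E(ue^s)=\CT_s\, sE(ue^s)$, and combining the two displays gives the claim.

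The only step requiring care is the residue invariance, if one wants to stay purely formal. As a fallback, we would argue directly by linearity. Write $E=\sum_{i=1}^{a} c_i(1-\lambda/u)^{-i}+R(\lambda)$ with $R$ regular at $u$, so the left side equals $\sum_i c_i$.
\begin{itemize}
\item For the regular part, $sR(ue^s)$ has zero constant term, so $R$ contributes nothing on the right.
\item For the principal part, one must check $-\CT_s\, s(1-e^s)^{-i}=1$ for every $i\ge1$, i.e.\ $\Res_s (e^s-1)^{-i}(-1)^{i+1}=1$.
\end{itemize}
This last residue identity is the expected main obstacle. It follows from the substitution $w=e^s-1$, which is again the residue invariance. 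Alternatively, one can do a direct coefficient check via the generating function $\bigl(s/(e^s-1)\bigr)^i=\sum_m B_m^{(i)}(0)s^m/m!$, which requires the value $B^{(i)}_{i-1}(0)/(i-1)!=(-1)^{i-1}$. I would take the residue-invariance route as the main proof.
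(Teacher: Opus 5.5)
Your main argument is correct and is essentially the paper's first proof: you use Lemma~\ref{lem:CT-res} to write $\CT_{\lambda=u}E=-\Res_{\lambda=u}E(\lambda)/\lambda$ and then apply the (Jacobi) change of variables $\lambda=ue^s$, while your linearity fallback mirrors the structure of the paper's second proof. If you want that fallback to be genuinely formal, the paper closes the principal-part identity $-\CT_s\, s(1-e^s)^{-i}=1$ by induction on $i$, using $\CT_s\, sG'(s)=0$ with $G(s)=\frac{1}{j}(1-e^s)^{-j}$; this avoids both residue invariance and the unproved value of $B^{(i)}_{i-1}(0)$.
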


We give two proofs. The first is for readers familiar with residue computations.
\begin{proof}[First proof]
We have
\begin{align*}
  \CT_{\lambda=u} E(\lambda) &= -\Res_{\lambda=u} \lambda^{-1} E(\lambda)\\
                             &= -\Res_{\lambda=0} (\lambda+u)^{-1} E(\lambda+u)
                             = -\Res_{\lambda} (\lambda+u)^{-1} E(\lambda+u),
\end{align*}
where $(\lambda+u)^{-1} E(\lambda+u)$ is expanded as a Laurent series in $\lambda$.  Next apply Jacobi's change of variable
$\lambda = u(e^s-1)$ (so that $d\lambda = u e^s ds$) to obtain
\begin{align*}
  \CT_{\lambda=u} E(\lambda) &= -\Res_{s} \frac{1}{u e^s} E(u e^s)\, u e^s
                             = -\CT_{s} s\,E(u e^s),
\end{align*}
as desired.
\end{proof}

The second proof is by direct constant term manipulation.
\begin{proof}[Second proof]
By linearity, it suffices to prove the identity for $E(\lambda)=(1-\lambda/u)^{-k}$ with integer $k$. That is,
we need to show
\[
\CT_s \frac{-s}{(1-e^s)^k} =
\begin{cases}
1, & k\ge 1,\\
0, & k\le 0.
\end{cases}
\]
The case $k\le 0$ is trivial; we prove the positive case by induction on $k$.

For $k=1$ the identity can be verified directly.  Assume it holds for $k=j$ and consider $k=j+1$.
Using $\CT_s\, s\,G'(s)=0$ for any formal Laurent series $G(s)$, we apply it to $G(s)=\frac{1}{j(1-e^s)^j}$:
\[
0=\CT_s s\left( \frac{1}{j(1-e^s)^{j}}\right){\!'} = \CT_s \frac{s e^s}{(1-e^s)^{j+1}}.
\]
Hence
\[
\CT_s \frac{-s}{(1-e^s)^{j+1}}
   = \CT_s \frac{-s}{(1-e^s)^{j+1}} + \CT_s \frac{s e^s}{(1-e^s)^{j+1}}
   = \CT_s \frac{-s}{(1-e^s)^{j}} = 1,
\]
by the induction hypothesis.
\end{proof}

\subsection{A constant term expression for weighted cotangent sums}
\label{subsec:Cot-CT}

Let $a,n,k\in\mathbb{N}$, $\theta\in\mathbb{R}$, and $0\le a\le k-1$.  Define
\begin{equation}\label{eq:cot-def}
 \Cot(a,n,k;\theta)=\sum_{\substack{j=0\\ j\neq j_0}}^{k-1} (\zeta_k^j)^a\cot^{n}\Bigl(\theta+\frac{\pi j}{k}\Bigr),
\end{equation}
where $j_0$ is the unique index (if it exists) for which $\theta+\frac{\pi j_0}{k}\equiv 0 \pmod \pi$.
For example, when $\theta=0$ we have $j_0=0$, and
\[
\Cot(a,n,k;0)=:\Cot(a,n,k)=\sum_{j=1}^{k-1} (\zeta_k^j)^a\cot^{n}\Bigl(\frac{\pi j}{k}\Bigr).
\]
This subsection establishes the following constant term representation.

\begin{prop}\label{prop:Cot-CT}
Let $k\geq 2$ be an integer and suppose $\dfrac{k\theta}{\pi}\notin\mathbb{Z}$, then there is no singularity and
\[
\Cot(a,n,k;\theta)
= (-i)^n\Bigl(
k\delta_{a,0}
+ \CT_s\Bigl(
s \cdot \frac{k e^{a(s-2i\theta)}}{1-e^{k(s-2i\theta)}}
\cdot \frac{(1+e^{s})^n}{(1-e^{s})^n}
\Bigr)\Bigr),
\]
where $\delta_{a,b}$ is the Kronecker delta.  Otherwise, a singularity exists with $j_0 \equiv -\frac{k\theta}{\pi} \pmod{k}$, and
\[
\begin{aligned}
\Cot(a,n,k;\theta)
&= \zeta_{k}^{a j_0}\, \Cot(a,n,k) \\
&= \zeta_k^{a j_0} (-i)^n \Bigl(
k\delta_{a,0}
- \CT_u \CT_s\, s\cdot \frac{k e^{a(s-u)}}{1-e^{k(s-u)}} \cdot \frac{(1+e^s)^n}{(1-e^s)^n}
- \CT_u \frac{(1+e^u)^n}{(1-e^u)^n}
\Bigr).
\end{aligned}
\]
\end{prop}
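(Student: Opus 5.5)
The plan is to convert the sum into an average of a rational function over the roots of a binomial, apply Lemma~\ref{lem:underlined-CT}, trade the underlined constant term for the constant term at the single remaining pole $\lambda=1$, and then move to the variable $s$ via Proposition~\ref{prop:change-var}.

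\textbf{Nonsingular case.} By Euler's formula, $\cot x=-i\,\frac{1+e^{2ix}}{1-e^{2ix}}$. Put $w=e^{2i\theta}$ and $\lambda_j=w\zeta_k^j$. Then
\[
\zeta_k^{aj}\cot^n\Bigl(\theta+\frac{\pi j}{k}\Bigr)=(-i)^n G(\lambda_j),\qquad
G(\lambda)=w^{-a}\lambda^a\Bigl(\frac{1+\lambda}{1-\lambda}\Bigr)^{n}.
\]
The points $\lambda_j$ are exactly the $k$ roots of $1-w^{-k}\lambda^k$. Lemma~\ref{lem:underlined-CT} therefore gives
\[
\Cot(a,n,k;\theta)=(-i)^n\,\CT_\lambda \underline{\frac{1}{1-w^{-k}\lambda^k}}\,kG(\lambda).
\]
Its hypothesis, that $G$ is finite at every $\lambda_j$, is precisely $\lambda_j\neq 1$, i.e.\ $k\theta/\pi\notin\mathbb Z$.

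Next, write $E(\lambda)=kG(\lambda)/(1-w^{-k}\lambda^k)$ and take its full partial fraction decomposition. Since $0\le a\le k-1$, the degree of the numerator ($a+n$) is less than the degree of the denominator ($k+n$), so $E$ is proper. Hence $E(0)=\CT_\lambda E$ equals the sum of the constant terms at all poles. The poles are the $\lambda_j$ and $\lambda=1$, and $E(0)=k\delta_{a,0}$. This yields
\[
\CT_\lambda \underline{\frac{1}{1-w^{-k}\lambda^k}}\,kG(\lambda)=k\delta_{a,0}-\CT_{\lambda=1}E(\lambda).
\]
Finally, Proposition~\ref{prop:change-var} with $u=1$ gives $\CT_{\lambda=1}E=-\CT_s\, sE(e^s)$. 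Since
\[
E(e^s)=\frac{k\,e^{a(s-2i\theta)}}{1-e^{k(s-2i\theta)}}\cdot\frac{(1+e^s)^n}{(1-e^s)^n},
\]
this is the first formula.

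\textbf{Singular case.} If $\theta+\pi j_0/k\equiv0\pmod\pi$, reindex $j\mapsto j+j_0$. This shows $\Cot(a,n,k;\theta)=\zeta_k^{aj_0}\Cot(a,n,k)$, so it suffices to treat $\theta=0$. Replace $2i\theta$ by a formal variable $u$ and set $w=e^u$. The sum over $j\ne0$ of $G(e^u\zeta_k^j)$ is a power series in $u$, so its value at $u=0$ equals its $\CT_u$. Write this sum as the full sum over all $j$ minus the $j=0$ term. The $j=0$ term is $\bigl(\frac{1+e^u}{1-e^u}\bigr)^n$, which produces $-\CT_u\frac{(1+e^u)^n}{(1-e^u)^n}$. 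For the full sum, the nonsingular computation applies verbatim for generic $u$, giving $k\delta_{a,0}$ plus the $s$-constant term with $2i\theta$ replaced by $u$. Applying $\CT_u$ to each piece then gives the stated shape.

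\textbf{Main obstacle.} I expect the main difficulty to be the sign in front of the iterated constant term, together with justifying the interchange of $\CT_u$ with the pole bookkeeping. In the nonsingular computation, $u$ is a constant and $1/(1-e^{k(s-u)})$ is a power series in $s$. In the iterated setting one must fix a working field of iterated Laurent series, where $u$ is small relative to $s$ or vice versa. Rewriting
\[
\frac{1}{1-e^{k(s-u)}}=-\frac{e^{-k(s-u)}}{1-e^{-k(s-u)}}
\]
changes which expansion is used. Equivalently, the pole $\lambda=1$ and the pole $\lambda=e^{u}$ exchange roles in the partial fraction argument, so the contribution is counted from the other side. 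I would redo the partial fraction step inside the field $\mathbb C((u))((s))$ so that the sign $-\CT_u\CT_s$ comes out directly, and check it against the case $n=1$, $a=0$, where $\Cot(0,1,k)=0$.
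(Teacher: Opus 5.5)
Your nonsingular case is correct. It is the paper's argument up to the rescaling $\lambda\mapsto e^{2i\theta}\lambda$: the paper keeps the nodes at $\zeta_k^j$ and applies Proposition~\ref{prop:change-var} at the pole $e^{-2i\theta}$, while you move the pole to $1$. Your reduction of the singular case and the slack-variable limit also follow the paper's route. The gap is the sentence ``applying $\CT_u$ to each piece then gives the stated shape'': it does not. Subtract the $j=0$ term from the nonsingular identity and take $\CT_u$, with the iterated constant term in $\mathbb{C}((u))((s))$ (for fixed small $u\neq0$ you take $\CT_s$ first, then Laurent-expand in $u$). This yields
\[
\Cot(a,n,k)=(-i)^n\Bigl(k\delta_{a,0}+\CT_u\CT_s\, s\cdot\frac{ke^{a(s-u)}}{1-e^{k(s-u)}}\cdot\frac{(1+e^s)^n}{(1-e^s)^n}-\CT_u\frac{(1+e^u)^n}{(1-e^u)^n}\Bigr),
\]
with a \emph{plus} sign in front of the iterated term. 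The fix you plan cannot work, because the minus sign in the statement is wrong, and your own sanity check exposes it. In $\mathbb{C}((u))((s))$ one has
\[
\CT_u\frac{k}{1-e^{k(s-u)}}=-\sum_{m\ge1}\frac{k^mB_m}{m!}s^{m-1},\qquad s\,\frac{1+e^s}{1-e^s}=-2-\frac{s^2}{6}-\cdots,
\]
so the iterated term equals $2kB_1=-k$, while $\CT_u\frac{1+e^u}{1-e^u}=0$. The stated formula then gives $\Cot(0,1,k)=-i(k+k)=-2ik$ instead of $0$. Likewise, for exponent $2$, $a=0$, $k=3$ it gives $-\frac{16}{3}$ instead of $\cot^2\frac{\pi}{3}+\cot^2\frac{2\pi}{3}=\frac23$. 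The plus sign gives the correct values in both cases.

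Neither mechanism you suggest changes this. The identity $\frac{1}{1-e^{k(s-u)}}=-\frac{e^{-k(s-u)}}{1-e^{-k(s-u)}}$ holds in any fixed field of iterated Laurent series, so it cannot alter a constant term. Switching to $\mathbb{C}((s))((u))$, where $\CT_u$ just sets $u=0$, lowers the iterated term by exactly $\CT_u\frac{(1+e^u)^n}{(1-e^u)^n}$. There the correct identity becomes $\Cot(a,n,k)=(-i)^n\bigl(k\delta_{a,0}+\CT_s\, s\,\frac{ke^{as}}{1-e^{ks}}\,\frac{(1+e^s)^n}{(1-e^s)^n}\bigr)$, which is the nonsingular formula at $\theta=0$, and the minus-sign version still fails on the same example. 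In the paper's proof the minus sign enters at the step justified by \eqref{eq:sum-F}, although \eqref{eq:sum-F} itself carries a plus sign. The final formula of Theorem~\ref{thm:cot-even} is consistent only with the plus sign; its proof silently reverses the sign when collecting terms. So keep your derivation and state the singular case with $+\CT_u\CT_s$ in $\mathbb{C}((u))((s))$, rather than trying to produce the minus sign.
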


\begin{proof}
Using Euler's formula $e^{ix}=\cos x+i\sin x$ we obtain
\[
\cot x = \frac{\cos x}{\sin x} = i\,\frac{e^{ix}+e^{-ix}}{e^{ix}-e^{-ix}} = -i\,\frac{1+e^{2ix}}{1-e^{2ix}}.
\]
Substituting $x=\theta +\pi j/k$ and setting $z=e^{2i\theta}$ gives
\[
\cot\Bigl(\theta+\frac{\pi j}{k}\Bigr) = -i\,\frac{1+z\zeta_k^{\,j}}{1-z\zeta_k^{\,j}}.
\]
Raising to the $n$-th power, multiplying by $\zeta_k^{aj}$, and summing over $j$ (excluding $j_0$ if it exists) yields
\[
\sum_{\substack{j=0\\ j\neq j_0}}^{k-1} \zeta_k^{aj} \cot^{n}\Bigl(\theta+\frac{\pi j}{k}\Bigr)
= (-i)^n \sum_{\substack{j=0\\ j\neq j_0}}^{k-1} \zeta_k^{aj} \Bigl(\frac{1+z\zeta_k^{\,j}}{1-z\zeta_k^{\,j}}\Bigr)^{\!n}
= (-i)^n \sum_{\substack{j=0\\ j\neq j_0}}^{k-1} F(\zeta_k^{\,j}),
\]
where
\[
F(\lambda) = \frac{\lambda^a (1+z\lambda)^n}{(1-z\lambda)^n}.
\]

We distinguish two cases.

\noindent\textbf{Case 1: $j_0$ does not exist.}
Applying Lemma~\ref{lem:underlined-CT} gives
\[
\sum_{j=0}^{k-1} F(\zeta_k^{\,j})
= k\,\CT_\lambda \underline{\frac{1}{1-\lambda^k}} F(\lambda)
= \CT_\lambda \underline{\frac{k\lambda^a}{1-\lambda^k}} \frac{(1+z\lambda)^n}{(1-z\lambda)^n}.
\]
Consider the PFD of the proper rational function (using $0\le a\le k-1$)
\[
f(\lambda) = \frac{k}{1-\lambda^k}F(\lambda) = \frac{k\lambda^{a}(1+\lambda z)^n}{(1-\lambda^k)(1-\lambda z)^n}
= \frac{A(\lambda)}{1-\lambda^k} + \frac{B(\lambda)}{(1-z\lambda)^n}.
\]

We need $A(0)$, which we obtain via $B(0)$:
\[
A(0) = f(0) - B(0) = k\delta_{a,0} - \CT_{\lambda=z^{-1}} f(\lambda).
\]
By Proposition~\ref{prop:change-var},
\begin{align*}
B(0) &= \CT_{\lambda=z^{-1}} \frac{1}{(1-\lambda z)^n}\frac{k\lambda^a(1+\lambda z)^n}{1-\lambda^k} \\
     &= \CT_s \frac{-s}{(1-e^s)^n}\frac{k z^{-a} e^{as}(1+e^s)^n}{1-z^{-k}e^{ks}}.
\end{align*}
Consequently,
\begin{equation}\label{eq:sum-F}
\sum_{j=0}^{k-1} F(\zeta_k^{\,j}) = k\delta_{a,0} + \CT_s \frac{s}{(1-e^s)^n}\frac{k z^{-a} e^{as}(1+e^s)^n}{1-z^{-k}e^{ks}}.
\end{equation}
Replacing $z$ by $e^{2i\theta}$ and multiplying by $(-i)^n$ gives the first identity.

\noindent\textbf{Case 2: $j_0$ exists.}
This means $z\zeta_k^{j_0}=1$.  First reduce to $\theta=0$:
\[
F(\zeta_k^j) = \frac{\zeta_k^{aj}(1+z\zeta_k^j)^n}{(1-z\zeta_k^j)^n}
= \frac{\zeta_k^{aj_0}\,\zeta_k^{a(j-j_0)}(1+\zeta_k^{j-j_0})^n}{(1-\zeta_k^{j-j_0})^n}.
\]
Since $\zeta_k^k=1$,
\begin{align*}
\Cot(a,n,k;\theta)
&= (-i)^n \zeta_k^{aj_0} \sum_{\substack{j=0\\ j\neq j_0}}^{k-1}
   \frac{\zeta_k^{a(j-j_0)}(1+\zeta_k^{j-j_0})^n}{(1-\zeta_k^{j-j_0})^n} \\
&= (-i)^n \zeta_k^{aj_0} \sum_{j=1}^{k-1}
   \frac{\zeta_k^{aj}(1+\zeta_k^j)^n}{(1-\zeta_k^j)^n}
 = \zeta_k^{aj_0} \Cot(a,n,k).
\end{align*}

To evaluate $\Cot(a,n,k)$, introduce a slack variable $\tilde{z}\to 1$ playing the role of $z$ in Case~1. Then
\begin{align*}
\Cot(a,n,k)
&= \lim_{\tilde{z}\to 1} (-i)^n \sum_{j=1}^{k-1} F(\zeta_k^j)\big|_{z=\tilde{z}} \\
&= (-i)^n \lim_{\tilde{z}\to 1} \Bigl(\sum_{j=0}^{k-1} F(\zeta_k^j) - F(1)\Bigr)\Big|_{z=\tilde{z}} \\
&\stackrel{\eqref{eq:sum-F}}{=} (-i)^n \lim_{\tilde{z}\to 1}
   \Bigl( k\delta_{a,0}
        - \CT_s \frac{s}{(1-e^s)^n}\frac{k \tilde{z}^{-a} e^{as}(1+e^s)^n}{1-\tilde{z}^{-k}e^{ks}}
        - \frac{(1+\tilde{z})^n}{(1-\tilde{z})^n} \Bigr) \\
&\stackrel{\tilde{z}=e^u}{=} (-i)^n \lim_{u\to 0}
   \Bigl( k\delta_{a,0}
        - \CT_s \frac{s}{(1-e^s)^n}\frac{k e^{-au} e^{as}(1+e^s)^n}{1-e^{-ku}e^{ks}}
        - \frac{(1+e^u)^n}{(1-e^u)^n} \Bigr) \\
&= (-i)^n \Bigl( k\delta_{a,0}
             - \CT_u \CT_s \frac{s}{(1-e^s)^n}\frac{k e^{a(s-u)}(1+e^s)^n}{1-e^{k(s-u)}}
             - \CT_u \frac{(1+e^u)^n}{(1-e^u)^n} \Bigr).
\end{align*}
In the last step the constant term with respect to $u$ is extracted.
\end{proof}

\section{Cotangent Power Sums}
\label{sec:cot}

\subsection{Explicit formulas for even and odd powers}
\label{subsec:formulas}

\begin{thm}\label{thm:cot-even}
Let $k\ge 2$, $n\ge 1$ be integers, and $a\in\{0,1,\dots,k-1\}$. Then
\[
\Cot(a,2n,k)=\sum_{j=1}^{k-1} (\zeta_k^j)^a\cot^{2n}\Bigl(\frac{\pi j}{k}\Bigr)
=(-1)^n\Bigl(
k\delta_{a,0}-2^{2n}\sum_{t=0}^{n} \frac{B_{2(n-t)}\bigl(\frac{a}{k}\bigr)}{(2(n-t))!}\, r_{2n,t}\, k^{2(n-t)}
\Bigr).
\]
\end{thm}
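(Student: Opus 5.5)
We specialize Proposition~\ref{prop:Cot-CT} to $\theta=0$, i.e.\ $j_0=0$ and $\zeta_k^{aj_0}=1$, with $n$ replaced by $2n$. Since $(-i)^{2n}=(-1)^n$, this gives
\[
\Cot(a,2n,k)=(-1)^n\Bigl(k\delta_{a,0}-\CT_u\CT_s\, s\cdot\frac{k e^{a(s-u)}}{1-e^{k(s-u)}}\cdot\Bigl(\frac{1+e^s}{1-e^s}\Bigr)^{2n}-\CT_u\Bigl(\frac{1+e^u}{1-e^u}\Bigr)^{2n}\Bigr).
\]
It then suffices to evaluate the two constant terms in closed form. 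The last one is immediate from \eqref{e-cot-expansion}: $\CT_u\bigl(\frac{1+e^u}{1-e^u}\bigr)^{2n}=2^{2n}r_{2n,n}$. This should give the $t=n$ summand, since $B_0(a/k)=1$ and $k^0/0!=1$.

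For the double constant term, set $x=s-u$ and rewrite the root-of-unity factor via the Bernoulli generating function \eqref{e-Ber-poly} with parameter $a/k$:
\[
\frac{k e^{ax}}{1-e^{kx}}=-\frac{1}{x}\cdot\frac{(kx)e^{(a/k)(kx)}}{e^{kx}-1}=-\sum_{m\ge0}\frac{B_m(a/k)\,k^m}{m!}(s-u)^{m-1}.
\]
The second factor is expanded as $2^{2n}\sum_{t\ge0}r_{2n,t}s^{2t-2n}$, again by \eqref{e-cot-expansion}. The expansion order is fixed by the proof of Proposition~\ref{prop:Cot-CT}: $u$ is a nonzero parameter when $\CT_s$ is taken, so $1/(s-u)$ is expanded in powers of $s/u$. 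Afterwards we take $\CT_u$.

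For $m\ge1$, the factor $(s-u)^{m-1}$ is a polynomial. The coefficient of $s^{2n-2t-1}$ in it carries $u^{m-2n+2t}$, so $\CT_u$ forces $m=2n-2t$ with coefficient $1$. This produces exactly the terms $2^{2n}r_{2n,t}B_{2(n-t)}(a/k)k^{2(n-t)}/(2(n-t))!$ for $0\le t<n$. For $m=0$, the term $1/(u-s)=\sum_j s^j/u^{j+1}$ contributes only negative powers of $u$ after $\CT_s$. Indeed the relevant $j=2n-2t-1$ requires $t<n$ and yields $u^{-(2n-2t)}$, so it is killed by $\CT_u$. Odd $m$ never appear, consistent with the evenness of the final formula.

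Combining the $t<n$ terms with the $t=n$ term from $\CT_u\bigl(\frac{1+e^u}{1-e^u}\bigr)^{2n}$ gives the full sum over $0\le t\le n$. The main obstacle is sign bookkeeping. Contributions come from the minus in the Bernoulli rewriting, the two minus signs in Proposition~\ref{prop:Cot-CT}, and the orientation of $1/(s-u)$ versus $1/(u-s)$. A naive count seems to give opposite signs to the $t<n$ and $t=n$ pieces, whereas the theorem has a uniform sign. Before trusting the bookkeeping, I would recheck the sign against a small case, e.g.\ $k=2$, $n=1$, $a=0$, where $\Cot(0,2,2)=\cot^2(\pi/2)=0$, and against $\sum_{j=1}^{k-1}\cot^2(\pi j/k)=(k-1)(k-2)/3$, adjusting the sign convention in the Bernoulli step accordingly.
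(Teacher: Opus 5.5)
\textbf{Verdict: genuine gap.} Your route is the paper's route: specialize Proposition~\ref{prop:Cot-CT} at $\theta=0$, expand with $r_{2n,t}$ and $B_m(a/k)$, and extract $\CT_s$ then $\CT_u$. Your coefficient extraction is also correct: only $m=2n-2t$ survives for $t<n$, and the $m=0$ term dies under $\CT_u$. The gap is the sign, which you leave unresolved, and the repair you propose is in the wrong place.

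The Bernoulli rewriting $\frac{ke^{ax}}{1-e^{kx}}=-\sum_{m\ge0}\frac{k^mB_m(a/k)}{m!}x^{m-1}$ is a correct identity. So the double constant term really equals $-2^{2n}\sum_{t=0}^{n-1}\frac{B_{2(n-t)}(a/k)}{(2(n-t))!}\,r_{2n,t}\,k^{2(n-t)}$. If you ``adjust'' that step to force a uniform sign, you reach the right formula only through two compensating errors. Checking small cases tells you which final formula is true, but it does not prove it.

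The actual source of the mismatch is the singular case of Proposition~\ref{prop:Cot-CT} as printed. In its proof, the line justified by \eqref{eq:sum-F} substitutes for $\sum_{j=0}^{k-1}F(\zeta_k^j)$. But \eqref{eq:sum-F} carries $+\CT_s(\cdots)$, while that line writes $-\CT_s(\cdots)$, and the proposition's statement inherits this error. The correct specialization is
\[
\Cot(a,2n,k)=(-1)^n\Bigl(k\delta_{a,0}+\CT_u\CT_s\, s\cdot\frac{k e^{a(s-u)}}{1-e^{k(s-u)}}\Bigl(\frac{1+e^s}{1-e^s}\Bigr)^{2n}-\CT_u\Bigl(\frac{1+e^u}{1-e^u}\Bigr)^{2n}\Bigr).
\]
With this sign, the $t<n$ terms enter with a minus sign coming from the Bernoulli step. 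The last term contributes $-2^{2n}r_{2n,n}$, which is exactly the $t=n$ summand with the same sign. Together these give the theorem.

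You can see that the printed sign is wrong already at $k=2$, $n=1$, $a=0$. It yields $(-1)\bigl(2+\tfrac43-\tfrac23\bigr)=-\tfrac83$, whereas $\Cot(0,2,2)=\cot^2(\pi/2)=0$. The paper's own proof of this theorem uses the printed sign and then simply ``collects'' the terms, so it passes over the same inconsistency you noticed. Your suspicion was justified, but to close the argument you must correct the proposition, not the Bernoulli expansion.
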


\begin{proof}
Set $\theta=0$ and replace $n$ by $2n$ in~\eqref{eq:cot-def}. The singular index is $j_0=0$, so
\[
\Cot(a,2n,k;0) = \Cot(a,2n,k) = \sum_{j=1}^{k-1} \bigl(\zeta_k^{j}\bigr)^{a} \cot^{2n}\Bigl(\frac{\pi j}{k}\Bigr).
\]
By Proposition~\ref{prop:Cot-CT},
\begin{equation}\label{eq:cot-even-intermediate}
\Cot(a,2n,k) = (-1)^n \Bigl(
k\delta_{a,0}
- \CT_u \CT_s \Bigl( s \cdot \frac{k e^{a(s-u)}}{1 - e^{k(s-u)}} \cdot \frac{(1+e^s)^{2n}}{(1-e^s)^{2n}} \Bigr)
- \CT_u \Bigl( \frac{(1+e^u)^{2n}}{(1-e^u)^{2n}} \Bigr)
\Bigr).
\end{equation}

We first evaluate the iterated constant term: expand the operand in $s$, take $\CT_s$, then expand in $u$ and take $\CT_u$.
The $u$-free part is expanded via~\eqref{e-cot-expansion} (with $n\mapsto 2n$):
\begin{equation}\label{eq:u-free}
\Bigl(\frac{1+e^s}{1-e^s}\Bigr)^{2n}
= 2^{2n} \sum_{t \ge 0} r_{2n,t}\, s^{2t-2n}.
\end{equation}
The $u$-dependent part uses~\eqref{e-Ber-poly} with $x=k(s-u)$ and $t=a/k$:
\begin{equation}\label{eq:u-dep}
\frac{k e^{a(s-u)}}{1 - e^{k(s-u)}}
= -\sum_{m=0}^{\infty} \frac{k^m B_m\!\bigl(\frac{a}{k}\bigr)}{m!} (s-u)^{m-1}
= -\frac{1}{s-u} - \sum_{m=1}^{\infty} \frac{k^m B_m\!\bigl(\frac{a}{k}\bigr)}{m!} (s-u)^{m-1}.
\end{equation}

Each summand in the second term is a polynomial in $s$ and $u$; the first term expands as
\[
\frac{1}{s-u} = -\frac{1}{u}\cdot\frac{1}{1-s/u} = -\sum_{j\ge 0}\frac{s^{j}}{u^{\,j+1}},
\]
which is valid when $|s/u|<1$. This is the case since we treat $u$ as constant when applying $\CT_s$.

Taking $\CT_u$ first (justified on the series level\footnote{An iterated constant term is properly interpreted in the field $\mathbb{C}((u))((s))$ of iterated Laurent series; see \cite{Xin2004}.}) gives
\[
\CT_u \Bigl( -\frac{1}{s-u} - \sum_{m=1}^{\infty} \frac{k^m B_m\!\bigl(\frac{a}{k}\bigr)}{m!} (s-u)^{m-1} \Bigr)
= -\sum_{m\ge 1}\frac{k^m B_m\!\bigl(\frac{a}{k}\bigr) s^{m-1}}{m!}.
\]
Combining with~\eqref{eq:u-free}, the iterated constant term becomes
\begin{align*}
\CT_s\biggl( s \cdot \frac{k e^{a(s-u)}}{1-e^{k(s-u)}} \cdot \frac{(1+e^s)^{2n}}{(1-e^s)^{2n}} \biggr)
&= \CT_s\biggl( -\frac{2^{2n}}{s^{2n-1}} \sum_{t\ge0} r_{2n,t} s^{2t}
                 \sum_{m\ge0} \frac{k^{m+1} B_{m+1}\!\bigl(\frac{a}{k}\bigr)}{(m+1)!} s^{m} \biggr)\\
&= -2^{2n} \sum_{t=0}^{n-1} \frac{B_{2(n-t)}\!\bigl(\frac{a}{k}\bigr)}{(2(n-t))!}\, r_{2n,t}\, k^{2(n-t)},
\end{align*}
where we extracted the coefficient of $s^{2n-1}$ from $2t+m=2n-1$, i.e., $m=2(n-t)-1$.

The remaining term $\frac{(1+e^u)^{2n}}{(1-e^u)^{2n}}$ in~\eqref{eq:cot-even-intermediate} is handled by substituting $s\mapsto u$ in~\eqref{e-cot-expansion}:
\[
\Bigl(\frac{1+e^u}{1-e^u}\Bigr)^{2n}
= 2^{2n} \sum_{t\ge 0} r_{2n,t}\, u^{2t-2n},
\]
whose constant term is $2^{2n} r_{2n,n}$ (attained at $t=n$).
Collecting all constant terms and multiplying by $(-1)^n$ completes the proof.
\end{proof}

\begin{cor}
Let $k\ge 2$, $n\ge 1$, and $a\in\{0,1,\dots,k-1\}$. Then
\[
\sum_{j=1}^{k-1} \cos\Bigl(\frac{2\pi a j}{k}\Bigr) \cot^{2n}\Bigl(\frac{\pi j}{k}\Bigr)
= \Re\bigl(\Cot(a,2n,k)\bigr) = \Cot(a,2n,k),
\]
and
\[
\sum_{j=1}^{k-1} \sin\Bigl(\frac{2\pi a j}{k}\Bigr) \cot^{2n}\Bigl(\frac{\pi j}{k}\Bigr)
= \Im\bigl(\Cot(a,2n,k)\bigr) = 0.
\]
\end{cor}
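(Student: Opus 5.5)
The two identities will follow from one fact: $\Cot(a,2n,k)$ is a real number. Once this is known, taking real and imaginary parts of $\zeta_k^{aj}=\cos(2\pi aj/k)+i\sin(2\pi aj/k)$ in the defining sum, and using that $\cot^{2n}(\pi j/k)$ is real, gives
\[
\Re\bigl(\Cot(a,2n,k)\bigr)=\sum_{j=1}^{k-1}\cos\Bigl(\frac{2\pi a j}{k}\Bigr)\cot^{2n}\Bigl(\frac{\pi j}{k}\Bigr),\qquad
\Im\bigl(\Cot(a,2n,k)\bigr)=\sum_{j=1}^{k-1}\sin\Bigl(\frac{2\pi a j}{k}\Bigr)\cot^{2n}\Bigl(\frac{\pi j}{k}\Bigr).
\]
Both displayed equalities of the corollary then follow at once.

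To show that the sum is real, I would argue in two ways. The fastest is to read it off Theorem~\ref{thm:cot-even}. The right-hand side there is $(-1)^n$ times $k\delta_{a,0}$ minus a finite sum, and every factor in that sum is real. The values $B_{2(n-t)}(a/k)$ are real because Bernoulli polynomials have rational coefficients. The $r_{2n,t}$ are rational by their explicit formula in terms of Bernoulli numbers. The powers of $k$ and $2$ are real as well.

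As an independent check that avoids the theorem, I would use the symmetry $j\mapsto k-j$. It permutes $\{1,\dots,k-1\}$ and sends $\cot(\pi j/k)$ to $-\cot(\pi j/k)$, so the even power $\cot^{2n}$ is unchanged. At the same time $\zeta_k^{aj}$ goes to $\zeta_k^{-aj}=\overline{\zeta_k^{aj}}$. Hence $\Cot(a,2n,k)$ equals its own complex conjugate. Equivalently, the sine-weighted sum is odd under the involution and therefore vanishes.

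There is no real obstacle. The only point to watch is that the parity is used essentially: the exponent must be even so that $\cot^{2n}$ is invariant under $j\mapsto k-j$. For odd powers the analogous argument would instead make the cosine sum vanish.
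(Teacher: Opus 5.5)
Your proof is correct and matches the paper's: both take real and imaginary parts of the defining sum and use that $\Cot(a,2n,k)$ is real, which the paper simply asserts and which follows from the real closed form in Theorem~\ref{thm:cot-even}. Your independent check via $j\mapsto k-j$ (so that $\Cot(a,2n,k)=\overline{\Cot(a,2n,k)}$) is a worthwhile addition, because it establishes reality without relying on that closed form.
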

\begin{proof}
By definition,
\[
\Cot(a,2n,k) = \sum_{j=1}^{k-1} e^{2\pi i a j /k} \cot^{2n}\Bigl(\frac{\pi j}{k}\Bigr).
\]
The real part recovers the cosine-weighted sum; the imaginary part vanishes because $\Cot(a,2n,k)$ is real.
\end{proof}

\begin{thm}\label{thm:cot-odd}
Let $k\ge 2$, $n\in\mathbb{N}$, and $a\in\{0,1,\dots,k-1\}$. Then
\[
\Cot(a,2n+1,k)=\sum_{j=1}^{k-1} (\zeta_k^j)^a\cot^{2n+1}\Bigl(\frac{\pi j}{k}\Bigr)
=(-1)^{n+1}\, i\Bigl(
k\delta_{a,0}+2^{2n+1}\sum_{t=0}^{n} \frac{B_{2(n-t)+1}\bigl(\frac{a}{k}\bigr)}{(2(n-t)+1)!}\, r_{2n+1,t}\, k^{2(n-t)+1}
\Bigr).
\]
\end{thm}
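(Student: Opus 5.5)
The plan is to repeat the argument of Theorem~\ref{thm:cot-even} verbatim, with the exponent $2n$ replaced by $2n+1$. The only changes will be in the prefactor and in the parity bookkeeping of the Laurent expansions. Set $\theta=0$, so that $j_0=0$, and apply the second case of Proposition~\ref{prop:Cot-CT} with $n\mapsto 2n+1$. Since $(-i)^{2n+1}=(-1)^n(-i)=(-1)^{n+1}i$, this gives
\[
\Cot(a,2n+1,k)=(-1)^{n+1}i\Bigl(k\delta_{a,0}-\CT_u\CT_s\, s\cdot\frac{k e^{a(s-u)}}{1-e^{k(s-u)}}\cdot\frac{(1+e^s)^{2n+1}}{(1-e^s)^{2n+1}}-\CT_u\frac{(1+e^u)^{2n+1}}{(1-e^u)^{2n+1}}\Bigr).
\]
It therefore suffices to show that the last constant term is $0$ and that the iterated constant term equals $-2^{2n+1}\sum_{t=0}^n \frac{B_{2(n-t)+1}(a/k)}{(2(n-t)+1)!}r_{2n+1,t}k^{2(n-t)+1}$.

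For the last term, I expand via \eqref{e-cot-expansion} with $n\mapsto 2n+1$, giving $2^{2n+1}\sum_{t\ge0}r_{2n+1,t}u^{2t-2n-1}$. Every exponent $2t-2n-1$ is odd, so no $u^0$ term appears and the constant term is $0$. This is the structural difference from the even case, where $t=n$ contributed $2^{2n}r_{2n,n}$.

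For the iterated term, I use the expansion \eqref{eq:u-dep} of $\frac{k e^{a(s-u)}}{1-e^{k(s-u)}}$. The piece $-1/(s-u)=\sum_{j\ge0}s^j/u^{j+1}$ has only negative powers of $u$, so it is killed by $\CT_u$. Taking $\CT_u$ of the remaining polynomial pieces simply sets $u=0$, which leaves $-\sum_{m\ge1}\frac{k^mB_m(a/k)}{m!}s^{m-1}$. Multiplying by $s$ and by $2^{2n+1}\sum_t r_{2n+1,t}s^{2t-2n-1}$, the coefficient of $s^0$ comes from $m=2n+1-2t$ with $0\le t\le n$. Every such $m$ is automatically at least $1$, so no boundary term is lost. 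This produces exactly $-2^{2n+1}\sum_{t=0}^n\frac{B_{2(n-t)+1}(a/k)}{(2(n-t)+1)!}r_{2n+1,t}k^{2(n-t)+1}$, and substituting back gives the stated formula.

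The main point requiring care is the order of the iterated constant terms. One must justify taking $\CT_u$ before $\CT_s$ in $\mathbb{C}((u))((s))$, as in the footnote to Theorem~\ref{thm:cot-even}, and confirm that the $1/(s-u)$ part contributes nothing. The rest is index matching, where the main risk is a sign slip in the prefactor $(-i)^{2n+1}$. I would check the final formula against a small case, such as $n=0$, $k=2$, $a=0$, where both sides vanish because $B_1(0)=-\tfrac12$ makes the bracket $2+2\cdot(-\tfrac12)\cdot2=0$.
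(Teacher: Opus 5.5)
Your plan is the paper's own (its proof of Theorem~\ref{thm:cot-odd} just says to repeat the steps of Theorem~\ref{thm:cot-even}), and your final formula is right. However, your derivation reaches it through two sign errors that cancel.

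The first error is in your own step. \eqref{e-cot-expansion} expands $\bigl(\frac{e^s+1}{e^s-1}\bigr)^{n}$, whereas the operand contains
\[
\Bigl(\frac{1+e^s}{1-e^s}\Bigr)^{2n+1}=-\Bigl(\frac{e^s+1}{e^s-1}\Bigr)^{2n+1}=-2^{2n+1}\sum_{t\ge0}r_{2n+1,t}\,s^{2t-2n-1}.
\]
Copying \eqref{eq:u-free} verbatim drops this factor $(-1)^{2n+1}$, which is invisible for even exponents. The paper's proof of Theorem~\ref{thm:tan} correctly writes $(-2)^n$ at this point. With the correct expansion, your index matching gives
\[
\CT_u\CT_s(\cdots)=+2^{2n+1}\sum_{t=0}^{n}\frac{B_{2(n-t)+1}(a/k)}{(2(n-t)+1)!}\,r_{2n+1,t}\,k^{2(n-t)+1}.
\]
Inserting this into the singular case of Proposition~\ref{prop:Cot-CT} exactly as printed, with $-\CT_u\CT_s$, yields $(-1)^{n+1}i\bigl(k\delta_{a,0}-2^{2n+1}\sum_{t=0}^{n}\cdots\bigr)$. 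That is false: at $n=0$, $k=2$, $a=0$ it gives $-4i$ instead of $\cot(\pi/2)=0$.

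The second error is a misprint in the proposition you are quoting. By \eqref{eq:sum-F}, $\sum_{j=0}^{k-1}F(\zeta_k^j)=k\delta_{a,0}+\CT_s(\cdots)$. So the line of Case~2 obtained from \eqref{eq:sum-F} should carry $+\CT_s$, and the singular-case formula should read $(-i)^n\bigl(k\delta_{a,0}+\CT_u\CT_s(\cdots)-\CT_u\frac{(1+e^u)^n}{(1-e^u)^n}\bigr)$. In the test case above, the iterated constant term is $-2$ and the $u$-only term is $0$, so only the plus sign returns $0$. The paper's even-case proof silently absorbs the same misprint in its final ``collecting'' step, which is why imitating it verbatim does not expose it.

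With both signs corrected, everything else you wrote goes through unchanged:
\begin{itemize}
\item the $u$-only term vanishes by parity;
\item $\sum_{j\ge0}s^j/u^{j+1}$ is killed by $\CT_u$;
\item every contributing $m=2(n-t)+1$ is at least $1$.
\end{itemize}
You then obtain $(-1)^{n+1}i\bigl(k\delta_{a,0}+2^{2n+1}\sum_{t=0}^{n}\cdots\bigr)$ as stated. Your proposed sanity check on the final formula cannot detect the problem, because the two errors cancel there. Apply the check to the intermediate constant-term identity instead.
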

\begin{proof}
Set $\theta=0$ and replace $n$ by $2n+1$ in~\eqref{eq:cot-def}. The singular index is $j_0=0$, so
\[
\Cot(a,2n+1,k;0) = \Cot(a,2n+1,k) = \sum_{j=1}^{k-1} \bigl(\zeta_k^j\bigr)^a \cot^{2n+1}\Bigl(\frac{\pi j}{k}\Bigr).
\]
The rest of the proof follows the same steps as in Theorem~\ref{thm:cot-even}; we omit the details.
\end{proof}

\begin{cor}
Let $0\le a<k$. Then
\[
\sum_{j=1}^{k-1}\cos\!\Bigl(\frac{2\pi a j}{k}\Bigr)\cot^{2n+1}\Bigl(\frac{\pi j}{k}\Bigr)=0,
\]
and
\[
\sum_{j=1}^{k-1}\sin\!\Bigl(\frac{2\pi a j}{k}\Bigr)\cot^{2n+1}\Bigl(\frac{\pi j}{k}\Bigr)
=(-1)^{n+1}\Bigl(
k\delta_{a,0}+2^{2n+1}\sum_{t=0}^{n} \frac{B_{2(n-t)+1}\bigl(\frac{a}{k}\bigr)}{(2(n-t)+1)!}\, r_{2n+1,t}\, k^{2(n-t)+1}
\Bigr).
\]
\end{cor}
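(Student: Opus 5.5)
This corollary follows from Theorem~\ref{thm:cot-odd} by splitting $\Cot(a,2n+1,k)$ into real and imaginary parts. By definition,
\[
\Cot(a,2n+1,k)=\sum_{j=1}^{k-1}\Bigl(\cos\Bigl(\frac{2\pi aj}{k}\Bigr)+i\sin\Bigl(\frac{2\pi aj}{k}\Bigr)\Bigr)\cot^{2n+1}\Bigl(\frac{\pi j}{k}\Bigr).
\]
Since $\cot(\pi j/k)$ is real, the cosine-weighted sum is $\Re\bigl(\Cot(a,2n+1,k)\bigr)$ and the sine-weighted sum is $\Im\bigl(\Cot(a,2n+1,k)\bigr)$.

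Next I show that $\Cot(a,2n+1,k)$ is purely imaginary, with imaginary part given by the bracket in Theorem~\ref{thm:cot-odd}. The Bernoulli polynomial values $B_m(a/k)$ are rational. The coefficients $r_{2n+1,t}$ are real, because they are built from Bernoulli numbers. The powers of $k$ and $2$ are real as well. Hence the bracketed expression
\[
k\delta_{a,0}+2^{2n+1}\sum_{t=0}^{n}\frac{B_{2(n-t)+1}(a/k)}{(2(n-t)+1)!}r_{2n+1,t}k^{2(n-t)+1}
\]
is real, and Theorem~\ref{thm:cot-odd} writes $\Cot(a,2n+1,k)$ as $(-1)^{n+1}i$ times it. Taking real parts gives $0$ for the cosine sum. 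Taking imaginary parts gives $(-1)^{n+1}$ times the bracket for the sine sum.

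As an independent check of the cosine claim, pair $j$ with $k-j$. Under this substitution $\cos(2\pi aj/k)$ is unchanged, while $\cot^{2n+1}(\pi j/k)$ changes sign because the power is odd. The terms therefore cancel in pairs, and when $j=k/2$ the cotangent itself vanishes, so the cosine sum is $0$. Similarly, $\sin(2\pi aj/k)\cot^{2n+1}(\pi j/k)$ is invariant under $j\mapsto k-j$, which is consistent with the sine sum being generally nonzero.

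The only step that needs any care is the reality of the bracket, and that is immediate from the definitions of $B_m$ and $r_{n,t}$. The substantive content sits entirely in Theorem~\ref{thm:cot-odd}, which we assume.
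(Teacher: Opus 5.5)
Your proposal is correct and matches the paper's argument. The paper likewise reads off the cosine and sine sums as the real and imaginary parts of $\Cot(a,2n+1,k)$, which Theorem~\ref{thm:cot-odd} shows is $(-1)^{n+1}i$ times a real quantity; your extra pairing $j\leftrightarrow k-j$ is a valid, theorem-free confirmation that the cosine sum vanishes, but the proof does not need it.
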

\begin{proof}
Since $\Cot(a,2n+1,k)$ is purely imaginary, taking real and imaginary parts yields the stated identities.
\end{proof}

\section{Generalizations and Applications}
\label{sec:apps}

\subsection{Tangent power sums with root-of-unity weights}
\label{subsec:tan}

Given the formulas for cotangent power sums, it is natural to seek analogous results for tangent powers.
For a real parameter $\theta$, define
\[
\Tan(a,n,k;\theta):=\sum_{\substack{j=0\\ j\neq j_0}}^{k-1} (\zeta_k^j)^a \tan^n\Bigl(\theta+\frac{\pi j}{k}\Bigr),
\]
where $j_0$ is the index (if it exists) for which $\theta+\frac{\pi j_0}{k}\equiv\frac{\pi}{2}\pmod{\pi}$.
Since $\tan x = -\cot(x+\frac{\pi}{2})$,
\begin{equation}\label{eq:cot-tan}
\Tan(a,n,k;\theta)=(-1)^n\Cot\Bigl(a,n,k;\theta+\frac{\pi}{2}\Bigr).
\end{equation}

One might attempt to mimic $\Cot(a,n,k)$ simply by replacing $\cot$ with $\tan$ in the sum without the $\theta$ parameter. However, when $k$ is even, the term with $j=k/2$ blows up, so a direct analogue is not possible. It is therefore more natural to start from the $\theta$-dependent definition above and then specialize. Accordingly, we set
\[
\Tan(a,n,k):=\Tan(a,n,k;0)=\sum_{\substack{j=0\\ j\neq j_0}}^{k-1} (\zeta_k^j)^a \tan^n\Bigl(\frac{\pi j}{k}\Bigr),
\]
where $j_0$ exists precisely when $k$ is even (in which case $j_0=k/2$). Note that $j=0$ contributes nothing because $\tan 0=0$.

\begin{thm}\label{thm:tan}
Let $k\ge2$, $n\ge1$ be integers, $a\in\{0,1,\dots,k-1\}$, and let $E_m(x)$ be the $m$-th Euler polynomial defined by~\eqref{e-Euler-poly}. Then:
\begin{enumerate}
    \item If $k$ is even,
    \begin{equation}\label{eq:Tan-even}
    \Tan(a,n,k)=(-1)^{a+n}\Cot(a,n,k),
    \end{equation}
    whose explicit value is given by Theorem~\ref{thm:cot-even} (or \ref{thm:cot-odd} depending on parity).
    \item If $k$ is odd,
    \begin{equation}\label{eq:Tan-odd}
    \Tan(a,n,k)=i^{\,n}\Bigl( k\delta_{a,0}+(-1)^{a+n}\,2^{n-1}\sum_{t=0}^{\lfloor (n-1)/2\rfloor} \frac{E_{n-2t-1}\bigl(\frac{a}{k}\bigr)}{(n-2t-1)!}\, r_{n,t}\, k^{n-2t}\Bigr).
    \end{equation}
\end{enumerate}
\end{thm}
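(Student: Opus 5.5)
The plan is to reduce everything to Proposition~\ref{prop:Cot-CT} through the relation \eqref{eq:cot-tan}, $\Tan(a,n,k)=(-1)^n\Cot(a,n,k;\pi/2)$, and then split according to whether $\theta=\pi/2$ produces a singular index.

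\emph{Case $k$ even.} Here $k\theta/\pi=k/2\in\mathbb{Z}$, so we are in the singular case of Proposition~\ref{prop:Cot-CT}. The singular index is $j_0\equiv -k/2\equiv k/2 \pmod k$, consistent with the definition of $\Tan(a,n,k)$. The proposition then gives
\[
\Cot(a,n,k;\tfrac{\pi}{2})=\zeta_k^{a k/2}\Cot(a,n,k)=(-1)^a\Cot(a,n,k).
\]
Multiplying by $(-1)^n$ yields \eqref{eq:Tan-even} at once. The explicit value then follows from Theorems~\ref{thm:cot-even} and~\ref{thm:cot-odd}.

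\emph{Case $k$ odd.} Now $k/2\notin\mathbb{Z}$, so there is no singularity, and we use the first formula of Proposition~\ref{prop:Cot-CT} with $2i\theta=i\pi$. Since $e^{-ia\pi}=(-1)^a$ and $e^{-ik\pi}=-1$ for $k$ odd, the kernel becomes
\[
\frac{k e^{a(s-i\pi)}}{1-e^{k(s-i\pi)}}=(-1)^a\,\frac{k e^{as}}{1+e^{ks}}.
\]
By \eqref{e-Euler-poly} with $x=a/k$ and $t=ks$, this kernel equals
\[
(-1)^a\,\frac{k}{2}\sum_{m\ge0}\frac{E_m(a/k)}{m!}k^m s^m .
\]
This is a power series in $s$, so no iterated constant term is needed.

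For the other factor, I will note carefully that $\frac{1+e^s}{1-e^s}=-\frac{e^s+1}{e^s-1}$. By \eqref{e-cot-expansion},
\[
\Bigl(\frac{1+e^s}{1-e^s}\Bigr)^{n}=(-1)^n2^n\sum_{t\ge0}r_{n,t}s^{2t-n}.
\]
(The proof of Theorem~\ref{thm:cot-even} could ignore this sign only because the power there is even.) Multiplying by $s$ and extracting $\CT_s$ forces $m+1+2t-n=0$, that is, $m=n-2t-1$, with $0\le t\le\lfloor (n-1)/2\rfloor$. The constant term is therefore
\[
(-1)^{a+n}2^{n-1}\sum_t \frac{E_{n-2t-1}(a/k)}{(n-2t-1)!}\,r_{n,t}\,k^{n-2t}.
\]

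Finally, the overall prefactor is $(-1)^n(-i)^n=i^n$, and the $k\delta_{a,0}$ term carries through unchanged, which gives \eqref{eq:Tan-odd}.

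The main obstacle is sign bookkeeping: the factor $(-1)^n$ from the orientation of $\coth$, together with $e^{-ik\pi}=-1$. I will check the result against the case $n=1$, $k=3$, $a=0$. There $\tan(\pi/3)+\tan(2\pi/3)=0$, while the formula gives $i(3-3)=0$, which confirms the sign $(-1)^{a+n}$.
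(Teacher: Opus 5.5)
Your proposal is correct and follows the paper's proof essentially step for step. It uses the same reduction $\Tan(a,n,k)=(-1)^n\Cot(a,n,k;\pi/2)$, the singular case of Proposition~\ref{prop:Cot-CT} with $j_0=k/2$ and $\zeta_k^{ak/2}=(-1)^a$ for even $k$, and for odd $k$ the nonsingular case with the Euler-polynomial expansion of $ke^{as}/(1+e^{ks})$ and the signed expansion $(-2)^n\sum_t r_{n,t}s^{2t-n}$, exactly as the paper does.
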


\begin{proof}
At $\theta=0$, \eqref{eq:cot-tan} gives
\begin{align}
\Tan(a,n,k)=(-1)^n\Cot\Bigl(a,n,k;\frac{\pi}{2}\Bigr). \label{e-4.2}
\end{align}
We separate the two cases according to Proposition~\ref{prop:Cot-CT} with $\theta=\frac{\pi}{2}$.

\noindent\textbf{Case 1: $k$ even.}
The shifted cotangent sum has a simple pole at $j_0=k/2$ because
$\theta+\frac{\pi j_0}{k}=\pi\equiv0\pmod{\pi}$.
Applying the singular case of Proposition~\ref{prop:Cot-CT} together with $\zeta_k^{j_0}=-1$,
\[
\Cot\Bigl(a,n,k;\frac{\pi}{2}\Bigr)=\zeta_k^{a j_0}\Cot(a,n,k)=(-1)^a\Cot(a,n,k).
\]
Combined with \eqref{e-4.2} we obtain $\Tan(a,n,k)=(-1)^{a+n}\Cot(a,n,k)$, i.e., \eqref{eq:Tan-even}.

\noindent\textbf{Case 2: $k$ odd.}
Because $k$ is odd, $-\frac{k\theta}{\pi}=-\frac{k}{2}$ is not an integer, so no singularity occurs.
Using the non-singular case of Proposition~\ref{prop:Cot-CT} with $\theta=\frac{\pi}{2}$,
\begin{equation}\label{eq:cot-halfpi-odd}
\Cot\Bigl(a,n,k;\frac{\pi}{2}\Bigr)=(-i)^n\Bigl( k\delta_{a,0}+(-1)^a\CT_s\Bigl( s\cdot\frac{ke^{as}}{1+e^{ks}}\cdot\frac{(1+e^s)^n}{(1-e^s)^n}\Bigr)\Bigr).
\end{equation}
Now expand the two factors. From~\eqref{e-Euler-poly} with $t=ks$ and $x=\frac{a}{k}$,
\[
\frac{k e^{as}}{1+e^{ks}}=\frac12\sum_{m=0}^{\infty}\frac{E_m\bigl(\frac{a}{k}\bigr)}{m!}\,k^{m+1}s^{m}.
\]

For the cotangent-type factor, \eqref{e-cot-expansion} gives
\[
\Bigl(\frac{1+e^s}{1-e^s}\Bigr)^{\!n}=(-2)^n\sum_{t\ge0} r_{n,t}\,s^{2t-n}.
\]
Multiplying by $s$ and extracting the constant term in $s$ requires
\[
1+m+(2t-n)=0 \;\Longleftrightarrow\; m=n-2t-1,
\]
so only $0\le t\le\lfloor (n-1)/2\rfloor$ contribute. Hence
\[
\CT_s\Bigl( s\cdot\frac{ke^{as}}{1+e^{ks}}\cdot\frac{(1+e^s)^n}{(1-e^s)^n}\Bigr)
=(-1)^n\,2^{n-1}\sum_{t=0}^{\lfloor (n-1)/2\rfloor} \frac{E_{n-2t-1}\bigl(\frac{a}{k}\bigr)}{(n-2t-1)!}\, r_{n,t}\, k^{n-2t}.
\]
Substituting this into \eqref{eq:cot-halfpi-odd} and using \eqref{e-4.2},
\[
\Tan(a,n,k)=(-1)^n\cdot(-i)^n\Bigl( k\delta_{a,0}+(-1)^a\cdot(-1)^n\,2^{n-1}\sum_{t=0}^{\lfloor (n-1)/2\rfloor} \frac{E_{n-2t-1}\bigl(\frac{a}{k}\bigr)}{(n-2t-1)!}\, r_{n,t}\, k^{n-2t}\Bigr).
\]
Since $(-1)^n(-i)^n=i^{\,n}$, this simplifies to \eqref{eq:Tan-odd}.
\end{proof}

\subsection{Sums of even powers of cosecant and secant}
\label{subsec:cscsec}

We introduce twisted Dowker-type sums (\cite{Dowker1987,Cvijovic2012}):
\[
\Csc(a,2n,k;\theta)=\sum_{\substack{j=0\\ j\neq j_0}}^{k-1} \zeta_k^{aj}\, \csc^{2n}\Bigl(\theta+\frac{\pi j}{k}\Bigr),\qquad
\Sec(a,2n,k;\theta)=\sum_{\substack{j=0\\ j\neq j_0}}^{k-1} \zeta_k^{aj}\, \sec^{2n}\Bigl(\theta+\frac{\pi j}{k}\Bigr),
\]
where $j_0$ is the singular index for the respective trigonometric function.

\begin{thm}\label{thm:csc}
For $k\ge2$, $n\ge1$, and $a\in\{0,1,\dots,k-1\}$,
\[
\Csc(a,2n,k;\theta)
= \sum_{\substack{j=0\\ j\neq j_0}}^{k-1} \zeta_k^{aj} \csc^{2n}\Bigl(\theta+\frac{\pi j}{k}\Bigr)
= \sum_{s=0}^{n} \binom{n}{s}\, \Cot(a, 2s, k;\theta).
\]
\end{thm}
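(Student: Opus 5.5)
The plan is to reduce Theorem~\ref{thm:csc} to a termwise identity and then sum over $j$. The key input is the elementary identity $\csc^2x = 1+\cot^2x$, valid whenever $\sin x\neq 0$. Raising it to the $n$-th power and expanding by the binomial theorem gives
\[
\csc^{2n}x=(1+\cot^2x)^n=\sum_{s=0}^{n}\binom{n}{s}\cot^{2s}x .
\]

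First, I will check that the excluded index $j_0$ is the same on both sides. In $\Csc(a,2n,k;\theta)$ the singular index is the $j$ with $\theta+\pi j/k\equiv 0\pmod\pi$. This is exactly the condition defining $j_0$ in \eqref{eq:cot-def} for every $\Cot(a,2s,k;\theta)$, since $\csc$ and $\cot$ have the same poles. Hence for every $j\neq j_0$ with $0\le j\le k-1$, the point $x_j=\theta+\pi j/k$ satisfies $\sin x_j\neq 0$, and the displayed identity holds at $x_j$. The $s=0$ term needs a convention: it should be read as $\cot^0 x_j=1$ for all non-excluded $j$, so that
\[
\Cot(a,0,k;\theta)=\sum_{j\neq j_0}\zeta_k^{aj}.
\]
This agrees with \eqref{eq:cot-def} with $n=0$.

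Next, I will multiply the termwise identity by $\zeta_k^{aj}$ and sum over $0\le j\le k-1$ with $j\neq j_0$. Interchanging the two finite sums gives
\[
\Csc(a,2n,k;\theta)=\sum_{s=0}^{n}\binom{n}{s}\sum_{j\neq j_0}\zeta_k^{aj}\cot^{2s}\Bigl(\theta+\frac{\pi j}{k}\Bigr)=\sum_{s=0}^{n}\binom{n}{s}\Cot(a,2s,k;\theta),
\]
which is the claim. No constant term machinery is needed here. If one wants explicit values, they then follow by inserting Proposition~\ref{prop:Cot-CT} (or Theorem~\ref{thm:cot-even} when $\theta=0$) into each summand.

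The only step requiring care, which I regard as the main (mild) obstacle, is the bookkeeping of the excluded index together with the $s=0$ term. I must confirm that the cosecant and cotangent sums exclude the same $j_0$, and that $\Cot(a,0,k;\theta)$ is interpreted as the truncated root-of-unity sum rather than a full geometric sum. With that convention, the $s=0$ term evaluates explicitly: when no $j_0$ exists it equals $k\delta_{a,0}$, and when $j_0$ exists it equals $k\delta_{a,0}-\zeta_k^{aj_0}$.
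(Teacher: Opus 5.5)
Your proposal is correct and follows the paper's proof: expand $\csc^{2n}x=(1+\cot^2x)^n$ binomially, multiply by $\zeta_k^{aj}$, sum over the admissible $j$, and recognize each inner sum as $\Cot(a,2s,k;\theta)$. Your extra checks go slightly beyond what the paper writes out, and both are right. The cosecant and cotangent sums exclude the same index $j_0$, and the $s=0$ term evaluates to $k\delta_{a,0}$ or $k\delta_{a,0}-\zeta_k^{aj_0}$.
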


\begin{proof}
From $\csc^2 x = 1 + \cot^2 x$ and the binomial theorem,
\[
\csc^{2n} x = (1+\cot^2 x)^n = \sum_{s=0}^n \binom{n}{s} \cot^{2s} x.
\]
Multiplying by $\zeta_k^{aj}$ and summing over admissible $j$ gives
\[
\sum_{\substack{j=0\\ j\neq j_0}}^{k-1} \zeta_k^{aj} \csc^{2n}\Bigl(\theta+\frac{\pi j}{k}\Bigr)
= \sum_{s=0}^n \binom{n}{s} \sum_{\substack{j=0\\ j\neq j_0}}^{k-1} \zeta_k^{aj} \cot^{2s}\Bigl(\theta+\frac{\pi j}{k}\Bigr).
\]
The inner sum is exactly $\Cot(a,2s,k;\theta)$. This completes the proof.
\end{proof}

\begin{thm}\label{thm:sec}
Let $k\ge 2$, $n\ge1$, and $a\in\{0,1,\dots,k-1\}$. Then
\[
\Sec(a,2n,k;\theta)
= \sum_{\substack{j=0\\ j\neq j_0}}^{k-1} \zeta_k^{aj} \sec^{2n}\Bigl(\theta+\frac{\pi j}{k}\Bigr)
= \sum_{s=0}^{n} \binom{n}{s}\, \Tan(a,2s,k;\theta).
\]
\end{thm}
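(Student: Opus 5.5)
The proof follows the same route as Theorem~\ref{thm:csc}, with the identity $\sec^2x=1+\tan^2x$ replacing $\csc^2x=1+\cot^2x$. For every $x$ with $\cos x\neq 0$, the binomial theorem gives
\[
\sec^{2n}x=(1+\tan^2x)^n=\sum_{s=0}^{n}\binom{n}{s}\tan^{2s}x .
\]
I will apply this pointwise at $x=\theta+\frac{\pi j}{k}$, multiply by $\zeta_k^{aj}$, and sum over the admissible indices $j$. Interchanging the two finite sums then gives $\sum_{s}\binom{n}{s}\sum_{j\neq j_0}\zeta_k^{aj}\tan^{2s}(\theta+\pi j/k)$.

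The only point needing care is that the excluded index $j_0$ is the same on both sides. Secant is singular exactly when $\theta+\frac{\pi j}{k}\equiv\frac{\pi}{2}\pmod{\pi}$, and this is precisely the singular-index condition in the definition of $\Tan(a,n,k;\theta)$. So the inner sum is literally $\Tan(a,2s,k;\theta)$ for every $s$.

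The $s=0$ term needs one remark: $\Tan(a,0,k;\theta)=\sum_{j\neq j_0}\zeta_k^{aj}$ must be read with the same exclusion of $j_0$. This is consistent with the definition, since $\tan^0=1$ is summed over the same index set. No constant-term machinery is required; the statement is an immediate consequence of the trigonometric identity and finite linearity.

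I expect no real obstacle. The only thing to state explicitly is the matching of singular indices (and, if desired, that when $j_0$ does not exist both sums run over all $j$). If one wants explicit values, one can then substitute Theorem~\ref{thm:tan}, or \eqref{eq:cot-tan} with Proposition~\ref{prop:Cot-CT}, into each $\Tan(a,2s,k;\theta)$, but that is not needed for the identity itself.
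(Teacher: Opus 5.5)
Your proposal is correct and matches the paper's proof, which just says the argument is analogous to the cosecant case: expand $\sec^{2n}x=(1+\tan^2x)^n$ binomially, weight by $\zeta_k^{aj}$, and sum over the admissible $j$. The paper leaves implicit two details you state explicitly: that the secant and tangent singular indices coincide, and that the $s=0$ term is summed over the same index set.
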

\begin{proof}
Analogous to the cosecant case, using $\sec^2 x = 1 + \tan^2 x$.
\end{proof}

\subsection{Examples and special cases}
\label{subsec:examples}

The corollaries gathered here are all direct consequences of the general theorems
in Sections~\ref{sec:cot} and~\ref{sec:apps}; they are classical identities, not new results.
We reproduce them in a unified way by specialising parameters in Theorems~\ref{thm:cot-even}--\ref{thm:tan} and using the
binomial expansions for $\csc^{2n}x$ and $\sec^{2n}x$.

\begin{cor}
\label{cor:ordinary}
For integers $k\ge2$, $n\in\mathbb{N}_+$,
\[
\sum_{j=1}^{k-1} \cot^{2n}\Bigl(\frac{\pi j}{k}\Bigr)
= (-1)^n\Bigl(
 k-2^{2n}\sum_{t=0}^{n} \frac{B_{2(n-t)}}{(2(n-t))!}\, r_{n,t}\, k^{2(n-t)}
\Bigr).
\]
\end{cor}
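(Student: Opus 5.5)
The plan is to obtain the corollary as the special case $a=0$ of Theorem~\ref{thm:cot-even}, with no new constant term computation. First, $\zeta_k^{0\cdot j}=1$ for every $j$, so the left-hand side is exactly $\Cot(0,2n,k)$. Second, $\delta_{0,0}=1$, so the leading term is $k$. Third, $B_m(0)=B_m$ by \eqref{e-Ber-poly} at $t=0$, which agrees with \eqref{e-Ber-Num}. So every $B_{2(n-t)}\bigl(\tfrac{a}{k}\bigr)$ becomes the Bernoulli number $B_{2(n-t)}$. Substituting these three facts into Theorem~\ref{thm:cot-even} gives
\[
\sum_{j=1}^{k-1}\cot^{2n}\Bigl(\frac{\pi j}{k}\Bigr)=(-1)^n\Bigl(k-2^{2n}\sum_{t=0}^{n}\frac{B_{2(n-t)}}{(2(n-t))!}\,r_{2n,t}\,k^{2(n-t)}\Bigr).
\]

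The step I expect to cause trouble is the coefficient index. Theorem~\ref{thm:cot-even} produces $r_{2n,t}$, coming from the expansion \eqref{e-cot-expansion} with $n\mapsto 2n$, whereas the corollary as printed shows $r_{n,t}$. I would read the printed $r_{n,t}$ as shorthand for (or a misprint of) $r_{2n,t}$, and confirm this with the smallest case.

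For that check take $n=1$, $k=2$. The left side is $\cot^2(\pi/2)=0$. With $r_{2,0}=1$, $r_{2,1}=\tfrac16$ and $B_2=\tfrac16$, the right side is
\[
-\Bigl(2-4\bigl(\tfrac{1}{12}\cdot 4+\tfrac16\bigr)\Bigr)=0,
\]
which is correct. Using $r_{1,t}$ instead, where $r_{1,1}=0$ by Table~\ref{tab:r-coeff}, gives a nonzero value. So the statement should be proved with $r_{2n,t}$.

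As a second sanity check I would also note that the $t=n$ term, namely $B_0\,r_{2n,n}$, is the contribution of $\CT_u\bigl(\frac{1+e^u}{1-e^u}\bigr)^{2n}=2^{2n}r_{2n,n}$ in the proof of Theorem~\ref{thm:cot-even}. This explains why the sum runs up to $t=n$ rather than $n-1$.
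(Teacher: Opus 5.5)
Your proposal is correct and matches the paper's proof: specialize Theorem~\ref{thm:cot-even} at $a=0$, using $\delta_{0,0}=1$ and $B_m(0)=B_m$. You are also right that the coefficient must be $r_{2n,t}$, since that is what the specialization produces, and the printed $r_{n,t}$ already fails at $n=1$, $k=2$. One small slip: the dash in Table~\ref{tab:r-coeff} does not mean $r_{1,1}=0$, because by \eqref{e-cot-expansion} $r_{1,t}=B_{2t}/(2t)!$, so $r_{1,1}=\tfrac{1}{12}$. The printed right-hand side is therefore $-\tfrac13$ rather than $0$ at $k=2$, and the conclusion of your check is unaffected.
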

\begin{proof}
Set $a=0$, $\theta=0$ in Theorem~\ref{thm:cot-even}. The singular index is $j_0=0$, and $B_m(0)=B_m$.
\end{proof}

\begin{cor}
\label{cor:square}
For $k\ge2$,
\[
\sum_{j=1}^{k-1} \cot^{2}\Bigl(\frac{\pi j}{k}\Bigr)
= \Cot(0,2,k)=\frac{(k-1)(k-2)}{3}.
\]
\end{cor}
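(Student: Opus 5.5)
The plan is to specialize Theorem~\ref{thm:cot-even} to $n=1$ and $a=0$, and then evaluate the two resulting terms explicitly. With $a=0$ we have $\delta_{a,0}=1$ and $B_m(0)=B_m$. The sum over $t$ runs over $t\in\{0,1\}$, so
\[
\Cot(0,2,k)=-\Bigl(k-4\Bigl(\frac{B_2}{2!}\,r_{2,0}\,k^{2}+\frac{B_0}{0!}\,r_{2,1}\,k^{0}\Bigr)\Bigr).
\]

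Next I will supply the constants.
\begin{itemize}
\item $B_0=1$ and $B_2=\tfrac16$, read off from \eqref{e-Ber-Num}.
\item $r_{2,0}=1$ and $r_{2,1}=\tfrac16$. These are listed in Table~\ref{tab:r-coeff}. They can also be recomputed directly from \eqref{e-cot-expansion}: since $\bigl(1+\tfrac{B_2}{2}s^2+\cdots\bigr)^2=1+\tfrac{1}{6}s^2+O(s^4)$, the coefficients of $s^{-2}$ and $s^{0}$ in $\tfrac14\bigl(\tfrac{e^s+1}{e^s-1}\bigr)^2$ are $1$ and $\tfrac16$.
\end{itemize}

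Substituting these values, the bracketed combination becomes $\tfrac{k^2}{12}+\tfrac16$. Hence
\[
\Cot(0,2,k)=-k+\frac{k^2}{3}+\frac23=\frac{k^2-3k+2}{3}=\frac{(k-1)(k-2)}{3}.
\]

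There is no real obstacle here. The only point needing care is the bookkeeping in Theorem~\ref{thm:cot-even}, namely which $t$ give nonzero contributions. This includes the boundary term $t=n$, which arises from $\CT_u$ of $\bigl(\tfrac{1+e^u}{1-e^u}\bigr)^{2n}$ and must be included. As a sanity check, the case $k=2$ gives $\cot^2(\pi/2)=0$, and the case $k=3$ gives $2\cdot\tfrac13=\tfrac23$; both agree with $(k-1)(k-2)/3$.
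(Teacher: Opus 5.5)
Your proof is correct and takes the same route as the paper: specialize the even-power cotangent formula (Theorem~\ref{thm:cot-even}) to $n=1$, $a=0$, and substitute $B_0$, $B_2$ and the two $r$-coefficients. The paper's one-line proof instead goes through Corollary~\ref{cor:ordinary}, whose statement misprints $r_{2n,t}$ as $r_{n,t}$ (hence its citation of $r_{1,0}$ only); your direct use of $r_{2,0}=1$ and $r_{2,1}=\tfrac16$ is the correct bookkeeping and produces the right constant term $\tfrac23$.
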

\begin{proof}
Take $n=1$ in Corollary~\ref{cor:ordinary}, with $r_{1,0}=1$ and $B_2=1/6$.
\end{proof}

\begin{cor}
\label{cor:csc2}
Let $k\ge2$, $a\in\{0,1,\dots,k-1\}$, $\zeta_k=e^{2\pi i/k}$. Then
\[
\sum_{j=1}^{k-1}\zeta_k^{aj}\csc^2\Bigl(\frac{\pi j}{k}\Bigr)
=2 a(a-k)+\frac{k^2}{3}-\frac{1}{3}.
\]
\end{cor}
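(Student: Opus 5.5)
The plan is to specialise Theorem~\ref{thm:csc} to $n=1$ and $\theta=0$, where the singular index is $j_0=0$. This gives
\[
\sum_{j=1}^{k-1}\zeta_k^{aj}\csc^2\Bigl(\frac{\pi j}{k}\Bigr)=\Cot(a,0,k)+\Cot(a,2,k),
\]
so it suffices to evaluate these two cotangent sums separately and add them.

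The zeroth-power sum is elementary. By orthogonality of roots of unity, $\Cot(a,0,k)=\sum_{j=1}^{k-1}\zeta_k^{aj}=k\delta_{a,0}-1$, since the full sum over $j=0,\dots,k-1$ equals $k\delta_{a,0}$ for $0\le a\le k-1$.

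For the second sum I will apply Theorem~\ref{thm:cot-even} with $n=1$. Only $t=0$ and $t=1$ contribute, and the values needed are:
\begin{itemize}
\item $r_{2,0}=1$ and $r_{2,1}=\tfrac16$, from Table~\ref{tab:r-coeff};
\item $B_0(x)=1$ and $B_2(x)=x^2-x+\tfrac16$, from \eqref{e-Ber-poly}.
\end{itemize}
Substituting these, I expect
\[
\Cot(a,2,k)=-\Bigl(k\delta_{a,0}-4\Bigl[\tfrac{k^2}{2}B_2\bigl(\tfrac ak\bigr)+\tfrac16\Bigr]\Bigr)
=-k\delta_{a,0}+2a^2-2ak+\tfrac{k^2}{3}+\tfrac23 .
\]
Here I use $k^2B_2(a/k)=a^2-ak+k^2/6$.

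Adding the two pieces, the Kronecker-delta terms $\pm k\delta_{a,0}$ cancel. What remains is $2a(a-k)+\tfrac{k^2}{3}+\tfrac23-1$, which is the claimed expression.

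The main point needing care is bookkeeping rather than any real obstacle. Specifically, I must get the overall sign $(-1)^n$ right and not drop the $t=n$ term. That term is $B_0\,r_{2,1}\cdot 2^2$, and it supplies the constant $\tfrac23$. As a sanity check, I will test $k=2$, $a=0$: the left side is $\csc^2(\pi/2)=1$, and the formula gives $0+\tfrac43-\tfrac13=1$.
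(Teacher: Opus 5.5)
Your proposal is correct and follows the paper's route exactly. Like the paper, you specialize Theorem~\ref{thm:csc} at $n=1$, $\theta=0$, evaluate $\Cot(a,0,k)=k\delta_{a,0}-1$ by orthogonality, and read off $\Cot(a,2,k)$ from Theorem~\ref{thm:cot-even} with $r_{2,0}=1$, $r_{2,1}=\tfrac16$. Keeping the $-k\delta_{a,0}$ term is actually necessary: the paper's displayed value $\Cot(a,2,k)=2a(a-k)+\tfrac{k^2}{3}+\tfrac23$ omits it and is off by $k$ at $a=0$ (compare Corollary~\ref{cor:square}), whereas in your version the delta terms cancel and the identity comes out correctly for every $a$.
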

\begin{proof}
With $n=1$, $\theta=0$, $j_0=0$ in Theorem~\ref{thm:csc}, the sum equals $\Cot(a,0,k)+\Cot(a,2,k)$.
Here $\Cot(a,0,k)=\sum_{j=1}^{k-1}\zeta_k^{aj}$ equals $k-1$ when $a=0$ and $-1$ when $1\le a\le k-1$.
Using Theorem~\ref{thm:cot-even} with $n=1$, $B_2(x)=x^2-x+\frac16$, $r_{2,0}=1$, $r_{2,1}=1/6$, we obtain
\[
\Cot(a,2,k)=2a(a-k)+\frac{k^2}{3}+\frac{2}{3}\qquad(0\le a\le k-1).
\]
Adding the two contributions yields the stated formula, which holds for all $a$ in the range.
\end{proof}

\begin{cor}
It holds that
\[
\sum_{j=0}^{k-1} \csc^2\Bigl(\frac{\pi(2j+1)}{2k}\Bigr) = k^2.
\]
\end{cor}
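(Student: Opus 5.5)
The plan is to write the sum as $\Csc(0,2,2k;\theta)$ for a suitable $\theta$ and then reduce it to a cotangent sum. The angles are $\frac{\pi(2j+1)}{2k}=\frac{\pi}{2k}+\frac{\pi j}{k}$, so with $\theta=\frac{\pi}{2k}$ the sum is $\Csc(0,2,k;\tfrac{\pi}{2k})$. Here $k\theta/\pi=\tfrac12\notin\mathbb{Z}$, so there is no singular index $j_0$. Theorem~\ref{thm:csc} with $n=1$ then gives
\[
\sum_{j=0}^{k-1}\csc^2\Bigl(\frac{\pi(2j+1)}{2k}\Bigr)=\Cot(0,0,k;\tfrac{\pi}{2k})+\Cot(0,2,k;\tfrac{\pi}{2k}) = k+\Cot(0,2,k;\tfrac{\pi}{2k}).
\]
So it suffices to show $\Cot(0,2,k;\tfrac{\pi}{2k})=k^2-k$.

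To compute this, I apply the nonsingular case of Proposition~\ref{prop:Cot-CT} with $a=0$ and $n=2$. We have $e^{-2i\theta k}=e^{-i\pi}=-1$, so $1-e^{k(s-2i\theta)}=1+e^{ks}$. Hence
\[
\Cot(0,2,k;\tfrac{\pi}{2k})=-\Bigl(k+\CT_s\, s\cdot\frac{k}{1+e^{ks}}\cdot\frac{(1+e^s)^2}{(1-e^s)^2}\Bigr).
\]
This is the same computation as in Case~2 of Theorem~\ref{thm:tan}, with $(-1)^a$ replaced by $1$. Expanding $\frac{k}{1+e^{ks}}=\frac12\sum_m E_m(0)k^{m+1}s^m/m!$ and $\bigl(\frac{1+e^s}{1-e^s}\bigr)^2=4\sum_t r_{2,t}s^{2t-2}$, the constant term picks out $m=1-2t$. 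Only $t=0$, $m=1$ contributes, which gives $2E_1(0)k^2 r_{2,0}$. Since $E_1(x)=x-\tfrac12$, this equals $-k^2$. Therefore $\Cot(0,2,k;\tfrac{\pi}{2k})=-(k-k^2)=k^2-k$, and the total is $k^2$.

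The only real obstacle is bookkeeping of signs and constants. These are the factor $(-i)^2=-1$, the substitution $z^{-k}=e^{-i\pi}=-1$ (which turns the Bernoulli kernel into the Euler kernel), and the correct value $E_1(0)=-\tfrac12$. I would sanity-check with $k=1$, where $\csc^2(\pi/2)=1$, and $k=2$, where $2\csc^2(\pi/4)=4$.
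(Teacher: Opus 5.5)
Your proof is correct, but it takes a different route from the paper's. The paper stays at $\theta=0$. It observes that the odd-index terms of $\sum_{j=1}^{2k-1}\csc^2(\pi j/(2k))$ are exactly your sum, while the even-index terms form $\sum_{j=1}^{k-1}\csc^2(\pi j/k)$. It then subtracts the two values $\frac{(2k)^2-1}{3}$ and $\frac{k^2-1}{3}$ supplied by the $a=0$ case of Corollary~\ref{cor:csc2}.

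You instead treat the sum as the shifted sum $\Csc(0,2,k;\frac{\pi}{2k})$, which has no singular index, so you can use the non-singular branch of Proposition~\ref{prop:Cot-CT}. There $e^{-2ik\theta}=-1$ turns the Bernoulli kernel into the Euler kernel $\frac{k}{1+e^{ks}}$. I checked your bookkeeping and it is right: the factor $(-i)^2=-1$, the expansion $\frac{k}{1+e^{ks}}=\frac12\sum_m E_m(0)k^{m+1}s^m/m!$, and $(\frac{1+e^s}{1-e^s})^2=4\sum_t r_{2,t}s^{2t-2}$. Only $t=0$, $m=1$ contributes, and $E_1(0)=-\frac12$ gives $\Cot(0,2,k;\frac{\pi}{2k})=k^2-k$, hence the total $k^2$.

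The paper's argument is shorter once Corollary~\ref{cor:csc2} is available, and it needs only the $\theta=0$ results. Yours actually uses the $\theta$-parameter the paper introduced and avoids the index-splitting trick. It also extends directly to the weighted sums $\sum_{j}\zeta_k^{aj}\csc^{2n}(\frac{\pi(2j+1)}{2k})$ for arbitrary $a$ and $n$. There the kernel becomes $e^{-i\pi a/k}\frac{ke^{as}}{1+e^{ks}}$, so the answer comes out in terms of $E_m(a/k)$.

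Two small points. Your opening sentence writes $\Csc(0,2,2k;\theta)$ where you mean $\Csc(0,2,k;\theta)$. Also, Theorem~\ref{thm:csc} and Proposition~\ref{prop:Cot-CT} are stated only for $k\ge2$, so the case $k=1$ should be handled directly, which your sanity check already does.
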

\begin{proof}
Take $a=0$ in Corollary~\ref{cor:csc2} and use the identity
\[
\sum_{j=0}^{k-1}\csc^2\Bigl(\frac{\pi(2j+1)}{2k}\Bigr)
= \sum_{j=1}^{2k-1}\csc^2\Bigl(\frac{\pi j}{2k}\Bigr)-\sum_{j=1}^{k-1}\csc^2\Bigl(\frac{\pi j}{k}\Bigr)
= \frac{(2k)^2-1}{3}-\frac{k^2-1}{3}=k^2.
\]
\end{proof}

\begin{cor}
Let $k$ be an odd positive integer. Then
\[
\sum_{j=1}^{k-1} \tan^{2}\Bigl(\frac{\pi j}{k}\Bigr) = k(k-1).
\]
\end{cor}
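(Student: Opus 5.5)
We specialise Theorem~\ref{thm:tan}, case~2, to $a=0$, $n=2$ with $k$ odd. Because $k$ is odd there is no singular index, and the $j=0$ term vanishes since $\tan 0=0$. Hence the left-hand side is exactly $\Tan(0,2,k)$, and formula~\eqref{eq:Tan-odd} applies directly.

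With $n=2$ we have $\lfloor (n-1)/2\rfloor=0$, so only $t=0$ contributes. Using $r_{2,0}=1$ from Table~\ref{tab:r-coeff}, $i^2=-1$, $(-1)^{a+n}=1$ and $2^{n-1}=2$, the formula becomes
\[
\Tan(0,2,k)=-\Bigl(k+2\,\frac{E_1(0)}{1!}\,k^2\Bigr).
\]

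It remains to find $E_1(0)$ from the generating function~\eqref{e-Euler-poly}. At $x=0$ it reads
\[
\frac{2}{e^t+1}=1-\frac{t}{2}+\cdots,
\]
so $E_0(0)=1$ and $E_1(0)=-\tfrac12$. Substituting,
\[
\Tan(0,2,k)=-\bigl(k-k^2\bigr)=k(k-1),
\]
which is the claim.

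There is no real obstacle here. The only care needed is with the signs: the factors $i^n$ and $(-1)^{a+n}$ in~\eqref{eq:Tan-odd}, and the sign of $E_1(0)$. As a check, $k=3$ gives $\tan^2(\pi/3)+\tan^2(2\pi/3)=3+3=6=3\cdot2$.
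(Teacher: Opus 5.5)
Your proof is correct and follows the paper's own argument: you specialise the odd-$k$ case of Theorem~\ref{thm:tan} to $a=0$, $n=2$, where only $t=0$ contributes, and $E_1(0)=-\tfrac12$ then gives $-(k-k^2)=k(k-1)$. You have written out the arithmetic that the paper compresses into the single remark ``the formula simplifies to $k^2-k$.''
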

\begin{proof}
Substitute $n=2$, $\theta=0$, $a=0$ into Theorem~\ref{thm:tan} (odd $k$). The formula simplifies to $k^2-k$.
\end{proof}

\begin{cor}
\label{cor:sec2}
Let $k\ge2$ be an integer and $a\in\{0,1,\dots,k-1\}$. Then
\[
\sum_{\substack{j=0\\ j\neq j_0}}^{k-1}\zeta_k^{aj}\sec^2\Bigl(\frac{\pi j}{k}\Bigr)=
\begin{cases}
(-1)^a\bigl(k^2-2ak\bigr), & k\ \text{odd},\\[6pt]
(-1)^a\Bigl(2a(a-k)+\dfrac{k^2}{3}-\dfrac{1}{3}\Bigr), & k\ \text{even}.
\end{cases}
\]
\end{cor}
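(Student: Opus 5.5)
We apply Theorem~\ref{thm:sec} with $n=1$ and $\theta=0$. This gives
\[
\sum_{\substack{j=0\\ j\neq j_0}}^{k-1}\zeta_k^{aj}\sec^2\Bigl(\frac{\pi j}{k}\Bigr)=\Tan(a,0,k)+\Tan(a,2,k).
\]
Here $\Tan(a,0,k)=\sum_{j\neq j_0}\zeta_k^{aj}$, where the sum now includes $j=0$. We then evaluate each of the two terms separately for $k$ odd and for $k$ even.

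\emph{Case $k$ odd.} No index is excluded, so $\Tan(a,0,k)=\sum_{j=0}^{k-1}\zeta_k^{aj}=k\delta_{a,0}$. For $\Tan(a,2,k)$ we use \eqref{eq:Tan-odd} with $n=2$. Only $t=0$ contributes, with $r_{2,0}=1$ and $E_1(x)=x-\tfrac12$, so
\[
\Tan(a,2,k)=-\Bigl(k\delta_{a,0}+(-1)^a\,2\,E_1\bigl(\tfrac ak\bigr)k^2\Bigr)=-k\delta_{a,0}+(-1)^a(k^2-2ak).
\]
Adding the two terms, the $\delta$ contributions cancel and we obtain $(-1)^a(k^2-2ak)$. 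As a consistency check, at $a=0$ this gives $k^2$, which agrees with $k+k(k-1)$ from the preceding corollary.

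\emph{Case $k$ even.} Here $j_0=k/2$ is excluded, so
\[
\Tan(a,0,k)=k\delta_{a,0}-\zeta_k^{ak/2}=k\delta_{a,0}-(-1)^a.
\]
By \eqref{eq:Tan-even}, $\Tan(a,2,k)=(-1)^a\Cot(a,2,k)$, and we take $\Cot(a,2,k)=2a(a-k)+\tfrac{k^2}{3}+\tfrac23$ from the proof of Corollary~\ref{cor:csc2}. Summing gives
\[
(-1)^a\Bigl(2a(a-k)+\tfrac{k^2}{3}-\tfrac13\Bigr)+k\delta_{a,0}.
\]

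The main obstacle is this leftover term $k\delta_{a,0}$ in the even case. I expect it to come from how $j=0$ is treated, since $\tan 0=0$ but $\sec 0=1$. I would resolve it by checking the term directly at $a=0$, $k=2$. The only admissible index is $j=0$ (since $j_0=1$ is excluded), so the sum is $\sec^2 0=1$. The stated formula gives $\tfrac43-\tfrac13=1$ at $a=0$, $k=2$, so there should be no extra term.

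This check shows that the term $k\delta_{a,0}$ must be absorbed somewhere: either $\Tan(a,0,k)$ should be read as $\Cot$-shifted (i.e.\ via \eqref{eq:cot-tan}, so $\Tan(a,0,k;0)=\Cot(a,0,k;\pi/2)=(-1)^a\Cot(a,0,k)=(-1)^a(k\delta_{a,0}-1)$), or equivalently the even case should be obtained directly from $\Sec=\Csc$ shifted by $\pi/2$. I will adopt the shifted reading, using $\sec x=\csc(x+\pi/2)$ together with Case~2 of Proposition~\ref{prop:Cot-CT}:
\[
\Sec(a,2,k)=(-1)^a\Csc(a,2,k)=(-1)^a\Bigl(2a(a-k)+\tfrac{k^2}{3}-\tfrac13\Bigr)
\]
by Corollary~\ref{cor:csc2}. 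This is the stated even-case formula, and it avoids the bookkeeping of $\Tan(a,0,k)$ entirely.
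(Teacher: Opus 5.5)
Your odd-$k$ case is the paper's argument, with $a=0$ and $a\neq0$ handled together through $\delta_{a,0}$. For even $k$, the route you finally adopt is correct but differs from the paper's. You use $\sec x=\csc(x+\frac{\pi}{2})$ and the reindexing $j\mapsto j+\frac k2$ to get $\Sec(a,2,k)=(-1)^a\Csc(a,2,k)$, and then quote Corollary~\ref{cor:csc2}. The reindexing only needs the $\pi$-periodicity of $\csc^2$, so the Case~2 reduction of Proposition~\ref{prop:Cot-CT} carries over verbatim (or apply it termwise after Theorem~\ref{thm:csc}). The paper instead stays with $\sec^2=1+\tan^2$ and converts $\Tan(a,2,k)=(-1)^a\Cot(a,2,k)$. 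It plugs in the explicit value of $\Cot(a,2,k)$ for $a\neq0$ and treats $a=0$ separately via Corollary~\ref{cor:square}. Your version is shorter and uniform in $a$, and it explains why the even-$k$ answer is exactly $(-1)^a$ times the cosecant formula. The paper's version computes everything directly from Theorems~\ref{thm:tan} and~\ref{thm:cot-even}.

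That said, your explanation of the stray $k\delta_{a,0}$ is wrong, and the first ``resolution'' you offer changes nothing. Reading $\Tan(a,0,k)$ through \eqref{eq:cot-tan} gives $(-1)^a(k\delta_{a,0}-1)=k\delta_{a,0}-(-1)^a$, which is exactly the value you already had, so your two alternatives are not equivalent. The term has nothing to do with $j=0$. It comes from the imported value $\Cot(a,2,k)=2a(a-k)+\frac{k^2}{3}+\frac23$, which is valid only for $a\neq0$. Theorem~\ref{thm:cot-even} with $n=1$ gives $\Cot(a,2,k)=-k\delta_{a,0}+2a(a-k)+\frac{k^2}{3}+\frac23$, so $\Cot(0,2,k)=\frac{(k-1)(k-2)}{3}$; for example $\Cot(0,2,2)=0$, not $2$. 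With this value, $(-1)^a(-k\delta_{a,0})=-k\delta_{a,0}$ cancels the $k\delta_{a,0}$ in $\Tan(a,0,k)$, and your first computation closes at once. That is essentially the paper's proof, which sidesteps the issue by treating $a=0$ separately with Corollary~\ref{cor:square}.

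The same caveat applies to your final citation. Corollary~\ref{cor:csc2} is correct as stated; at $a=0$ it is the classical $\frac{k^2-1}{3}$. However, the intermediate display in its proof omits the $-k\delta_{a,0}$. So confirm the $a=0$ case directly via $(k-1)+\frac{(k-1)(k-2)}{3}=\frac{k^2-1}{3}$ rather than relying on that derivation.
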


\begin{proof}
Using $\sec^2 x = 1+\tan^2 x$ and the definition of $\Sec(a,2,k)$,
\[
\Sec(a,2,k)=\sum_{\substack{j=0\\ j\neq j_0}}^{k-1}\zeta_k^{aj}+\Tan(a,2,k).
\]

\noindent\textbf{Odd $k$:} No singularity. When $a=0$, the geometric sum is $k$ and $\Tan(0,2,k)=k^2-k$, giving $k^2$. When $1\le a\le k-1$, the geometric sum vanishes and $\Tan(a,2,k)=(-1)^a(k^2-2ak)$ (from Theorem~\ref{thm:tan}, case~2). Both cases are covered by $(-1)^a(k^2-2ak)$.

\noindent\textbf{Even $k$:} The singularity is $j_0=k/2$. We consider two cases. For $a\neq0$,
\[
\sum_{\substack{j=0\\ j\neq k/2}}^{k-1}\zeta_k^{aj}= -\zeta_k^{ak/2}=(-1)^{a+1}.
\]
By Theorem~\ref{thm:tan} (case~1), $\Tan(a,2,k)=(-1)^a\Cot(a,2,k)$. Using the value of $\Cot(a,2,k)$ from the proof of Corollary~\ref{cor:csc2},
\[
\Tan(a,2,k)=(-1)^a\Bigl(2a(a-k)+\frac{k^2}{3}+\frac{2}{3}\Bigr).
\]
Hence
\[
\Sec(a,2,k)=(-1)^{a+1}+(-1)^a\Bigl(2a(a-k)+\frac{k^2}{3}+\frac{2}{3}\Bigr)
=(-1)^a\Bigl(2a(a-k)+\frac{k^2-1}{3}\Bigr).
\]

For $a=0$, the sum reduces to
\[
\sum_{\substack{j=0\\ j\neq k/2}}^{k-1}\sec^2\Bigl(\frac{\pi j}{k}\Bigr)
=k-1+\Tan(0,2,k)=k-1+\Cot(0,2,k)=\frac{k^2-1}{3},
\]
by Corollary~\ref{cor:square}.
\end{proof}


\begin{cor}
\label{cor:quartic}
For $k\ge2$,
\[
\sum_{j=1}^{k-1} \cot^{4}\Bigl(\frac{\pi j}{k}\Bigr)
= \frac{(k-1)(k-2)(k^2+3k-13)}{45}.
\]
\end{cor}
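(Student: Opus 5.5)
The plan is to specialize Theorem~\ref{thm:cot-even} to $a=0$ and $n=2$, and then simplify the resulting polynomial in $k$. With $a=0$ we have $\delta_{a,0}=1$, $B_m(0)=B_m$ and $(-1)^n=1$, so the theorem gives
\[
\sum_{j=1}^{k-1}\cot^4\Bigl(\frac{\pi j}{k}\Bigr)=k-16\sum_{t=0}^{2}\frac{B_{4-2t}}{(4-2t)!}\,r_{4,t}\,k^{4-2t}.
\]
I will use the coefficients $r_{2n,t}$ here, as in Theorem~\ref{thm:cot-even}. Corollary~\ref{cor:ordinary} writes $r_{n,t}$, which I take to be a typo.

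The inputs I need are $B_0=1$, $B_2=\tfrac16$ and $B_4=-\tfrac1{30}$. From Table~\ref{tab:r-coeff} I also need $r_{4,0}=1$, $r_{4,1}=\tfrac13$ and $r_{4,2}=\tfrac{13}{360}$. I would double-check these table entries directly from \eqref{e-cot-expansion}. Write $(1+X)^4$ with $X=\tfrac{s^2}{12}-\tfrac{s^4}{720}+\cdots$. Then the $s^2$ coefficient is $4\cdot\tfrac1{12}=\tfrac13$. The $s^4$ coefficient is $4\cdot(-\tfrac1{720})+6\cdot\tfrac1{144}=\tfrac{13}{360}$.

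The three terms of the sum are as follows:
\begin{itemize}
\item $t=0$: $\tfrac{B_4}{4!}k^4=-\tfrac{k^4}{720}$.
\item $t=1$: $\tfrac{B_2}{2!}\cdot\tfrac13\,k^2=\tfrac{k^2}{36}$.
\item $t=2$: $B_0\cdot\tfrac{13}{360}=\tfrac{13}{360}$.
\end{itemize}
Multiplying the sum by $16$ gives $-\tfrac{k^4}{45}+\tfrac{4k^2}{9}+\tfrac{26}{45}$. Therefore the sum equals
\[
k+\frac{k^4}{45}-\frac{20k^2}{45}-\frac{26}{45}=\frac{k^4-20k^2+45k-26}{45}.
\]

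The last step is to expand $(k-1)(k-2)(k^2+3k-13)=(k^2-3k+2)(k^2+3k-13)$. This gives $k^4-20k^2+45k-26$, which matches and finishes the argument.

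There is no real obstacle here. The only delicate point is bookkeeping: the $t=n$ term (here $t=2$, i.e.\ $B_0\,r_{4,2}$) comes from the separate $\CT_u$ contribution in the proof of Theorem~\ref{thm:cot-even}. It must not be dropped, since it supplies the constant $-\tfrac{26}{45}$. As a sanity check, I would verify the final formula at $k=2$, where the sum is $\cot^4(\pi/2)=0$, and at $k=3$, where the sum is $2\cdot\tfrac19=\tfrac29=\tfrac{2\cdot1\cdot5}{45}$.
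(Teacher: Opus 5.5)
Your proof is correct and takes the same route as the paper. The paper also just specializes Theorem~\ref{thm:cot-even} to $n=2$, $a=0$ with $r_{4,0}=1$, $r_{4,1}=\tfrac13$, $r_{4,2}=\tfrac{13}{360}$, $B_2=\tfrac16$, $B_4=-\tfrac1{30}$; your arithmetic giving $\frac{k^4-20k^2+45k-26}{45}$ is right, and so is your reading of $r_{n,t}$ in Corollary~\ref{cor:ordinary} as a typo for $r_{2n,t}$.
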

\begin{proof}
Take $n=2$, $a=0$ in Theorem~\ref{thm:cot-even}, with $r_{4,0}=1$, $r_{4,1}=1/3$, $r_{4,2}=13/360$, $B_2=1/6$, $B_4=-1/30$.
\end{proof}

\begin{cor}
We have
\[
\sum_{\substack{j=1\\ j\ \text{odd}}}^{k-1}
\cot^{2}\Bigl(\frac{\pi j}{2k}\Bigr)\csc^{2}\Bigl(\frac{\pi j}{2k}\Bigr)
= \frac{k^2(k^2-1)}{6}.
\]
\end{cor}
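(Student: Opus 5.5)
The plan is to reduce the odd-index sum to two complete sums over all $j$, each of which is given by the cosecant and cotangent results at $a=0$. The tool is the identity $\cot^2x\csc^2x=\csc^4x-\csc^2x$, combined with Theorem~\ref{thm:csc} and Corollaries~\ref{cor:square}, \ref{cor:quartic}.

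\emph{Step 1: extend to a full half-period.} Write $S_k$ for the left-hand side and $x_j=\pi j/(2k)$. Over odd $j$ in $1\le j\le 2k-1$, the map $j\mapsto 2k-j$ preserves oddness and sends $x_j$ to $\pi-x_j$. It therefore preserves $\cot^2$ and $\csc^2$, and it exchanges the odd indices below $k$ with those above $k$. If $k$ itself is odd, the term $j=k$ has $x_k=\pi/2$, where $\cot^2=0$, so it contributes nothing. Hence
\[
S_k=\frac12\sum_{\substack{j=1\\ j\ \text{odd}}}^{2k-1}\cot^2\Bigl(\frac{\pi j}{2k}\Bigr)\csc^2\Bigl(\frac{\pi j}{2k}\Bigr)=\frac12\bigl(f(2k)-f(k)\bigr),
\]
where $f(m)=\sum_{j=1}^{m-1}\cot^2(\pi j/m)\csc^2(\pi j/m)$. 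The second equality comes from splitting all $j\in[1,2k-1]$ into odd and even $j$; the even indices $j=2l$ give $\pi l/k$ with $1\le l\le k-1$, which is exactly $f(k)$.

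\emph{Step 2: evaluate $f(m)$ for $m\ge 2$.} Using $\cot^2x\csc^2x=\csc^4x-\csc^2x$, we get $f(m)=\Csc(0,4,m)-\Csc(0,2,m)$. Theorem~\ref{thm:csc} with $\theta=0$ and $a=0$ gives
\[
\Csc(0,4,m)=\Cot(0,0,m)+2\Cot(0,2,m)+\Cot(0,4,m),\qquad \Csc(0,2,m)=\Cot(0,0,m)+\Cot(0,2,m).
\]
Therefore $f(m)=\Cot(0,2,m)+\Cot(0,4,m)$. By Corollaries~\ref{cor:square} and~\ref{cor:quartic} this equals
\[
\frac{(m-1)(m-2)}{3}+\frac{(m-1)(m-2)(m^2+3m-13)}{45}=\frac{(m-1)(m-2)(m^2+3m+2)}{45}=\frac{(m^2-1)(m^2-4)}{45}.
\]
The bracket simplifies because $m^2+3m+2=(m+1)(m+2)$. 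The same formula holds for $m=1$, where the sum is empty and the formula gives $0$; this covers $f(k)$ in the degenerate case $k=1$.

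\emph{Step 3: finish.} Substituting into Step 1,
\[
S_k=\frac{(4k^2-1)(4k^2-4)-(k^2-1)(k^2-4)}{90}.
\]
Factoring out $(k^2-1)$ leaves $4(4k^2-1)-(k^2-4)=15k^2$, so $S_k=k^2(k^2-1)/6$. As a sanity check, $k=2$ gives $2$, $k=3$ gives $12$ and $k=4$ gives $40$; each agrees with direct computation.

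The main obstacle is the bookkeeping in Step 1: getting the halving correct, handling the term $j=k$ when $k$ is odd, and handling $k=1$. The rest is mechanical once Theorem~\ref{thm:csc} has reduced everything to $\Cot(0,2,m)$ and $\Cot(0,4,m)$.
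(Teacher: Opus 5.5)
Your proof is correct and follows essentially the same route as the paper. Both arguments reduce the summand to $\cot^2x+\cot^4x$: the paper does this directly via $\csc^2x=1+\cot^2x$, while your route through $\csc^4x-\csc^2x$ and Theorem~\ref{thm:csc} collapses to the same expression. Both then extend to all $j\le 2k-1$, subtract the even-index terms, apply Corollaries~\ref{cor:square} and~\ref{cor:quartic}, and halve, and your explicit handling of the symmetry $j\mapsto 2k-j$, the vanishing term $j=k$ for odd $k$, and the case $k=1$ fills in bookkeeping the paper leaves implicit.
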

\begin{proof}
Let $S$ denote the sum. Using $\csc^2 x = 1+\cot^2 x$,
\[
S = \sum_{\substack{j=1\\ j\ \text{odd}}}^{k-1}\cot^{2}\Bigl(\frac{\pi j}{2k}\Bigr)
    +\sum_{\substack{j=1\\ j\ \text{odd}}}^{k-1}\cot^{4}\Bigl(\frac{\pi j}{2k}\Bigr).
\]
Extending the sums to $j=1,\dots,2k-1$ and subtracting the even-index terms yields
\[
\sum_{j=1}^{2k-1}\cot^{2}\Bigl(\frac{\pi j}{2k}\Bigr)+\sum_{j=1}^{2k-1}\cot^{4}\Bigl(\frac{\pi j}{2k}\Bigr)
-\sum_{j=1}^{k-1}\cot^{2}\Bigl(\frac{\pi j}{k}\Bigr)-\sum_{j=1}^{k-1}\cot^{4}\Bigl(\frac{\pi j}{k}\Bigr)
= \frac{k^4-k^2}{3},
\]
by Corollaries~\ref{cor:square} and \ref{cor:quartic}. The left-hand side equals $2S$, hence $S=k^2(k^2-1)/6$.
\end{proof}

\begin{cor}\label{cot-ae}
Let $n\ge1$ and $k\ge2$ be even. Then
\[
\sum_{j=1}^{k-1} (-1)^j \cot^{2n}\Bigl(\frac{\pi j}{k}\Bigr)
= (-1)^{n+1} 2^{2n} \sum_{t=0}^{n} r_{2n,t}\, \frac{B_{2(n-t)}\bigl(\frac12\bigr)}{(2(n-t))!}\, k^{2(n-t)}.
\]
\end{cor}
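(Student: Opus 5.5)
This follows by specializing Theorem~\ref{thm:cot-even} to $a=k/2$, which is an admissible index in $\{0,1,\dots,k-1\}$ because $k$ is even and $k\ge 2$. With this choice $\zeta_k^{aj}=e^{2\pi i (k/2) j/k}=e^{\pi i j}=(-1)^j$, so the left-hand side is exactly $\Cot(k/2,2n,k)$. Moreover $a=k/2\neq 0$, so $\delta_{a,0}=0$, and $a/k=\tfrac12$.

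Substituting into Theorem~\ref{thm:cot-even} gives
\[
\sum_{j=1}^{k-1}(-1)^j\cot^{2n}\Bigl(\frac{\pi j}{k}\Bigr)
=(-1)^n\Bigl(-2^{2n}\sum_{t=0}^{n}\frac{B_{2(n-t)}\bigl(\frac12\bigr)}{(2(n-t))!}\,r_{2n,t}\,k^{2(n-t)}\Bigr).
\]
Absorbing the minus sign into $(-1)^n$ yields the stated $(-1)^{n+1}2^{2n}\sum_t\cdots$.

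There is essentially no obstacle. The only points to check are that $a=k/2$ falls in the permitted range and that the Kronecker delta term drops out; both hold since $k\ge 2$ is even. If an explicit simplification were wanted, one could further use $B_m(\tfrac12)=(2^{1-m}-1)B_m$, but the statement keeps $B_{2(n-t)}(\tfrac12)$, so this is unnecessary.
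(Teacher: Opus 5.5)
Your proposal is correct and takes the same approach as the paper, whose proof is simply to set $a=k/2$ in Theorem~\ref{thm:cot-even}. Your checks fill in the details the paper leaves implicit: that $a=k/2$ is admissible, that $\zeta_k^{aj}=(-1)^j$, that $\delta_{a,0}=0$, and that $(-1)^n\cdot(-2^{2n})=(-1)^{n+1}2^{2n}$.
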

\begin{proof}
Set $a=k/2$ in Theorem~\ref{thm:cot-even}.
\end{proof}

\begin{cor}
For even $k\ge2$,
\[
\sum_{j=1}^{k-1} (-1)^j \cot^{2}\Bigl(\frac{\pi j}{k}\Bigr) = -\frac{1}{6}k^2+\frac{2}{3}.
\]
\end{cor}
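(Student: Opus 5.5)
The plan is to specialize Corollary~\ref{cot-ae} to $n=1$ and evaluate the finitely many resulting terms. Note first that this corollary is Theorem~\ref{thm:cot-even} with $a=k/2$. Since $k\ge2$ is even, $a=k/2$ is an integer in $\{1,\dots,k-1\}$, so $\delta_{a,0}=0$ and $\zeta_k^{aj}=e^{\pi i j}=(-1)^j$.

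With $n=1$ the prefactor is $(-1)^{n+1}2^{2n}=4$. The sum over $t$ has two terms:
\begin{itemize}
\item $t=0$, giving $r_{2,0}\,\frac{B_2(1/2)}{2!}\,k^2$;
\item $t=1$, giving $r_{2,1}\,\frac{B_0(1/2)}{0!}\,k^0$.
\end{itemize}
The $t=n$ term must be kept. In the proof of Theorem~\ref{thm:cot-even} it arises from $\CT_u$ of $\bigl(\frac{1+e^u}{1-e^u}\bigr)^{2n}$, which equals $2^{2n}r_{2n,n}$, and it matches the summand with $B_0=1$.

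The inputs are:
\begin{itemize}
\item $r_{2,0}=1$ and $r_{2,1}=\tfrac16$, from Table~\ref{tab:r-coeff};
\item $B_0(x)=1$;
\item $B_2(x)=x^2-x+\tfrac16$, so $B_2(\tfrac12)=\tfrac14-\tfrac12+\tfrac16=-\tfrac1{12}$.
\end{itemize}
Substituting gives
\[
4\Bigl(-\frac{k^2}{24}+\frac16\Bigr)=-\frac{k^2}{6}+\frac23,
\]
which is the claim.

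The only delicate point is bookkeeping, namely confirming that the $t=n$ term and its sign are correctly included. I will cross-check by an independent elementary route. Because $k$ is even, the terms with even $j=2m$ satisfy $\cot^2(\pi j/k)=\cot^2\bigl(\pi m/(k/2)\bigr)$. Hence
\[
\sum_{j=1}^{k-1}(-1)^j\cot^2\Bigl(\frac{\pi j}{k}\Bigr)
=2\sum_{m=1}^{k/2-1}\cot^2\Bigl(\frac{\pi m}{k/2}\Bigr)-\sum_{j=1}^{k-1}\cot^2\Bigl(\frac{\pi j}{k}\Bigr).
\]
Applying Corollary~\ref{cor:square} to both sums on the right gives
\[
\frac{(k-2)(k-4)}{6}-\frac{(k-1)(k-2)}{3}=\frac{(k-2)(-k-2)}{6}=-\frac{k^2-4}{6},
\]
which agrees with $-\frac{k^2}{6}+\frac23$. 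The case $k=2$, where both sides are $0$, serves as a final sanity check.
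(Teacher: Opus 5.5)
Your proposal is correct and follows the paper's own proof: set $n=1$ in Corollary~\ref{cot-ae}, keep both the $t=0$ and $t=1$ terms with $r_{2,0}=1$, $r_{2,1}=\tfrac16$, $B_2(\tfrac12)=-\tfrac1{12}$ and $B_0=1$, and obtain $4\bigl(-\tfrac{k^2}{24}+\tfrac16\bigr)=-\tfrac{k^2}{6}+\tfrac23$. Your even/odd-index cross-check via Corollary~\ref{cor:square} is also valid and gives a confirmation independent of the constant-term sign bookkeeping; for $k=2$ the half-modulus sum is empty and equals $0$, which is outside that corollary's stated range $k\ge2$ but trivially agrees with its formula.
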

\begin{proof}
Take $n=1$ in the previous corollary, with $r_{2,0}=1$, $r_{2,1}=1/6$.
\end{proof}

\begin{cor}
For even $k\ge2$,
\[
\sum_{j=1}^{k-1} (-1)^j \cot^{4}\Bigl(\frac{\pi j}{k}\Bigr) = -\frac{7}{360}k^4+\frac{2}{9}k-\frac{26}{45}.
\]
\end{cor}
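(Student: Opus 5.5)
The plan is to specialize Corollary~\ref{cot-ae} to $n=2$, exactly as the preceding corollary specialized it to $n=1$. I would insert the values of the universal coefficients from Table~\ref{tab:r-coeff}, namely $r_{4,0}=1$, $r_{4,1}=\tfrac13$ and $r_{4,2}=\tfrac{13}{360}$. I would also need the midpoint Bernoulli values $B_0(\tfrac12)=1$, $B_2(\tfrac12)=-\tfrac1{12}$ and $B_4(\tfrac12)=\tfrac{7}{240}$. These follow from $B_m(\tfrac12)=(2^{1-m}-1)B_m$ together with $B_2=\tfrac16$ and $B_4=-\tfrac1{30}$.

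With these values the right-hand side of Corollary~\ref{cot-ae} becomes
\[
-2^{4}\Bigl(\frac{7/240}{24}k^4+\frac13\cdot\frac{-1/12}{2}k^2+\frac{13}{360}\Bigr)
=-16\Bigl(\frac{7}{5760}k^4-\frac{1}{72}k^2+\frac{13}{360}\Bigr)
=-\frac{7}{360}k^4+\frac{2}{9}k^2-\frac{26}{45}.
\]
The $t=0,1,2$ terms of the sum contribute the powers $k^4,k^2,k^0$ respectively. So the method can only ever produce even powers of $k$.

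This is where I expect the real obstacle, and it concerns the statement itself rather than the proof technique. The middle term as printed is $\tfrac29 k$, but the computation above gives $\tfrac29 k^2$. A direct check settles which is right.
\begin{itemize}
\item At $k=2$ the sum is $(-1)\cot^4(\pi/2)=0$. The printed formula gives $-\tfrac{14}{45}+\tfrac{20}{45}-\tfrac{26}{45}=-\tfrac49\neq0$, whereas the $k^2$ version gives $-\tfrac{14}{45}+\tfrac{40}{45}-\tfrac{26}{45}=0$.
\item At $k=4$ the sum is $-1+0-1=-2$. This is again reproduced by the $k^2$ version: $-\tfrac{1792}{360}+\tfrac{32}{9}-\tfrac{26}{45}=-2$.
\end{itemize}

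Hence the statement as worded cannot be proved, since it is false for $k=2$. The proof plan above establishes the evidently intended identity
\[
\sum_{j=1}^{k-1}(-1)^j\cot^4\Bigl(\frac{\pi j}{k}\Bigr)=-\frac{7}{360}k^4+\frac29k^2-\frac{26}{45}.
\]
I would present it with a remark that the printed linear term $\tfrac29k$ is a typographical error for $\tfrac29k^2$.
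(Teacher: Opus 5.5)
Your proposal is correct and is the same argument as the paper's proof, which specializes Corollary~\ref{cot-ae} to $n=2$ with $r_{4,1}=\tfrac13$ and $r_{4,2}=\tfrac{13}{360}$; with $B_2(\tfrac12)=-\tfrac1{12}$ and $B_4(\tfrac12)=\tfrac{7}{240}$, that specialization gives exactly your $-\tfrac{7}{360}k^4+\tfrac29k^2-\tfrac{26}{45}$. Your diagnosis of the printed statement is also right: the term $\tfrac29k$ is a typo for $\tfrac29k^2$, since at $k=2$ the left side is $0$ while the printed right side is $-\tfrac49$, so what the argument actually establishes is the corrected identity.
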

\begin{proof}
Take $n=2$ with $r_{4,1}=1/3$, $r_{4,2}=13/360$ in Corollary~\ref{cot-ae}.
\end{proof}

We now apply our formulas to a classical problem posed by Acton~\cite{AMM4632}. The original solution uses contour integration; our unified method gives a more elementary proof.

\begin{cor}
For any positive integer $k$,
\[
\sum_{m=0}^{k-1} (-1)^m \tan\Bigl(\frac{\pi(2m+1)}{4k}\Bigr) = (-1)^{k+1} k.
\]
\end{cor}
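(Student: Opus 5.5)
The plan is to recognize the sum as half of a root-of-unity weighted tangent sum with modulus $K=2k$, weight $a=k$ (so that $\zeta_{2k}^{aj}=(-1)^j$) and shift $\theta=\frac{\pi}{4k}$. That full sum is then evaluated with the non-singular case of Proposition~\ref{prop:Cot-CT} for $n=1$.

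\textbf{Step 1 (folding).} Put $\varphi_m=\frac{\pi(2m+1)}{4k}=\frac{\pi}{4k}+\frac{\pi m}{2k}$ and let $S$ denote the left-hand side. Consider
\[
T=\sum_{j=0}^{2k-1}(-1)^j\tan\Bigl(\tfrac{\pi}{4k}+\tfrac{\pi j}{2k}\Bigr)=\Tan\Bigl(k,1,2k;\tfrac{\pi}{4k}\Bigr).
\]
Here no $j$ is singular, since $\frac{\pi}{4k}+\frac{\pi j}{2k}$ is never an odd multiple of $\frac{\pi}{2}$.

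Split $T$ into $j=m$ and $j=k+m$ with $0\le m\le k-1$. For the second half, $\tan(\varphi_m+\frac{\pi}{2})=-\cot\varphi_m$, so
\[
T=S-(-1)^k\sum_{m=0}^{k-1}(-1)^m\cot\varphi_m .
\]
The reflection $m\mapsto k-1-m$ sends $\varphi_m$ to $\frac{\pi}{2}-\varphi_m$, so $\cot\varphi_m=\tan\varphi_{k-1-m}$. It also gives $(-1)^m=(-1)^{k-1}(-1)^{k-1-m}$. Hence the cotangent sum equals $(-1)^{k-1}S$, and therefore $T=2S$.

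\textbf{Step 2 (constant term).} By \eqref{eq:cot-tan}, $T=-\Cot(k,1,2k;\theta')$ with $\theta'=\frac{\pi}{4k}+\frac{\pi}{2}$. Since $\frac{2k\theta'}{\pi}=k+\frac12\notin\mathbb{Z}$, the first formula of Proposition~\ref{prop:Cot-CT} applies, and $\delta_{k,0}=0$. Two phase factors need evaluating:
\[
e^{-2k\cdot 2i\theta'}=e^{-i\pi(2k+1)}=-1,\qquad e^{-2ik\theta'}=e^{-i\pi/2}(-1)^k=-i(-1)^k .
\]
Thus
\[
\Cot(k,1,2k;\theta')=(-i)\cdot(-i)(-1)^k\,\CT_s\, s\cdot\frac{2k\,e^{ks}}{1+e^{2ks}}\cdot\frac{1+e^s}{1-e^s}.
\]

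\textbf{Step 3 (extraction).} The factor $\frac{2ke^{ks}}{1+e^{2ks}}$ is a power series with constant term $k$. Also $\frac{1+e^s}{1-e^s}=-\frac{2}{s}+O(s)$, by \eqref{e-cot-expansion} with $r_{1,0}=1$. So the constant term is $-2k$; equivalently, this is Theorem~\ref{thm:tan}'s Euler-polynomial computation with only $E_0=1$ contributing.

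It follows that $\Cot(k,1,2k;\theta')=-(-1)^k(-2k)=2k(-1)^k$. Hence $T=(-1)^{k+1}2k$ and $S=(-1)^{k+1}k$.

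\textbf{Main obstacle.} The only delicate point is the folding in Step 1, namely tracking the signs $(-1)^k$ and $(-1)^{k-1}$ so that the two halves add rather than cancel. A secondary risk is the bookkeeping of the phase factors in Step 2. I would sanity-check both against $k=1$, where $S=\tan\frac{\pi}{4}=1$.
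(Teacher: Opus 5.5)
Your proof is correct, and it takes a different route from the paper's.

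\textbf{The paper's route.} The paper stays at $\theta=0$ and uses the larger modulus $4k$ with weight $a=k$, so that $\zeta_{4k}^{kj}=i^j$. It then proceeds in three steps:
\begin{enumerate}
\item The even-modulus case of Theorem~\ref{thm:tan} gives $\Tan(k,1,4k)=(-1)^{k+1}\Cot(k,1,4k)$.
\item Theorem~\ref{thm:cot-odd} gives $\Cot(k,1,4k)=2ik$; only $B_1(\tfrac14)=-\tfrac14$ enters.
\item The sum is folded via $j\leftrightarrow 4k-j$. Under this pairing the odd-index terms coincide and the even-index terms cancel; the paper leaves the cancellation implicit.
\end{enumerate}

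\textbf{Your route.} You use modulus $2k$ with weights $(-1)^j$ and the shift $\theta=\frac{\pi}{4k}$. This places the sample points exactly at the angles $\varphi_m$ and at their translates by $\frac{\pi}{2}$. No index is singular, so you only need the non-singular case of Proposition~\ref{prop:Cot-CT}. The constant term then collapses to the product of the leading coefficients $k$ and $-2$. Your sign bookkeeping checks out:
\begin{enumerate}
\item The half-period shift together with the reflection $m\mapsto k-1-m$ gives $T=2S$.
\item The phases $e^{-4ik\theta'}=-1$ and $e^{-2ik\theta'}=-i(-1)^k$ are right.
\item The value $\Cot(k,1,2k;\theta')=2k(-1)^k$ agrees with a direct geometric-series evaluation of $\sum_j \zeta^{aj}\frac{1+z\zeta^j}{1-z\zeta^j}$.
\end{enumerate}

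\textbf{What each buys.} Your route actually exercises the $\theta$-parameter of Proposition~\ref{prop:Cot-CT}. It also bypasses the iterated $\CT_u\CT_s$ limiting argument that underlies the $\theta=0$ theorems. The paper's route is a pure specialization of closed forms it has already stated, so it needs no fresh constant-term computation, at the cost of a larger modulus and a two-type folding argument.

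\textbf{Minor caveat.} Your remark that Step~3 is ``Theorem~\ref{thm:tan}'s Euler-polynomial computation'' is only an analogy, since that theorem concerns $\theta=0$ and odd modulus. Nothing in your argument depends on it.
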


\begin{proof}
Theorem~\ref{thm:tan} (case~1) for even modulus states $\Tan(a,n,k)=(-1)^{a+n}\Cot(a,n,k)$. Replace $k$ by $4k$ and set $n=1$, $a=k$. The singular index is $j_0=2k$, giving
\[
\Tan(k,1,4k)=\sum_{j=1,\,j\neq2k}^{4k-1}\zeta_{4k}^{kj}\tan\Bigl(\frac{\pi j}{4k}\Bigr)=(-1)^{k+1}\Cot(k,1,4k).
\]
Since $\zeta_{4k}^{kj}=i^j$, the left side equals $\sum_{j=1}^{4k-1} i^j\tan(\pi j/(4k))$.

Using Theorem~\ref{thm:cot-odd} with $n=0$, $a=k$, $k\to4k$ yields $\Cot(k,1,4k)=2ik$, so
\[
\Tan(k,1,4k)=(-1)^{k+1}\cdot 2ik.
\]

The symmetry $\tan\bigl(\frac{\pi(4k-j)}{4k}\bigr)=-\tan\bigl(\frac{\pi j}{4k}\bigr)$ and $i^{4k-j}=i^{-j}=-i^j$ (for odd $j$) show that terms at $j$ and $4k-j$ coincide. Hence
\[
\sum_{\substack{j=1\\ j\ \text{odd}}}^{2k-1} i^j \tan\Bigl(\frac{\pi j}{4k}\Bigr)=\frac12\Tan(k,1,4k)=(-1)^{k+1}ik.
\]
Now set $j=2m+1$ ($m=0,\dots,k-1$). Then $i^j=i(-1)^m$, and
\[
\sum_{m=0}^{k-1} (-1)^m \tan\Bigl(\frac{\pi(2m+1)}{4k}\Bigr)
= i^{-1}\sum_{\substack{j=1\\ j\ \text{odd}}}^{2k-1} i^j \tan\Bigl(\frac{\pi j}{4k}\Bigr)
= -i\cdot (-1)^{k+1}ik = (-1)^{k+1}k.
\]
This concludes the proof.
\end{proof}

Finally, we provide a concrete computational example to demonstrate the efficiency of our derived formulas.

\noindent\textbf{Example.}
Take $k=5$, $a=3$, $n=3$ in formula \ref{eq:Tan-odd}. Since $k$ is odd, no singular index $j_0$ exists. Let $\zeta=e^{2\pi i/5}$. We aim to evaluate
\[
\Tan(3,3,5)=\sum_{j=1}^{4}\zeta^{3j}\tan^{3}\!\Bigl(\frac{\pi j}{5}\Bigr).
\]
Using the precomputed coefficients $E_2(\tfrac{3}{5})=-\tfrac{6}{25}$, $E_0(x)=1$, $r_{3,0}=1$, $r_{3,1}=\tfrac14$, we compute:
\begin{align*}
\Tan(3,3,5)
&= -i\cdot 2^{2}\Bigl(\frac{E_{2}(\tfrac{3}{5})}{2!}\cdot 5^{3}+\frac{1}{4}\cdot\frac{E_{0}(\tfrac{3}{5})}{0!}\cdot 5\Bigr) \\
&= -4i\Bigl(-15+\frac{5}{4}\Bigr) = 55i.
\end{align*}
We thus obtain the closed-form value
\[
\sum_{j=1}^{4}\zeta^{3j}\tan^{3}\!\Bigl(\frac{\pi j}{5}\Bigr)=55i.
\]

\section{Concluding Remarks}
\label{sec:conclusion}

We have applied the constant term method for iterated Laurent series to finite trigonometric
power sums of cotangent, tangent, cosecant and secant weighted by primitive $k$th roots of unity.
In contrast to conventional approaches that rely on convolutions or fractional part functions,
our unified framework yields explicit closed forms for both even and odd powers by combining
generating functions, partial fraction decomposition and constant term extraction.

Using Euler’s formula, we converted the trigonometric sums into evaluations of rational functions
and then reduced the summations to the extraction of constant terms from iterated Laurent series.
This strategy avoids cumbersome case distinctions and significantly simplifies the derivations.
As applications we recovered numerous classical identities—for instance, the ordinary and
alternating cotangent power sums and Acton’s alternating tangent sum.

The results highlight the deep interplay between cyclotomic roots, Bernoulli and Euler
polynomials, and finite trigonometric sums, and they provide effective tools for problems in
analytic number theory.  The constant term method can be further extended to twisted sums,
lattice point enumeration and topics related to modular forms.

A natural question left open by our work is the existence of analogous closed-form expressions
for \emph{odd-power} sums of cosecant and secant with root-of-unity weights.  Unlike the
even-power case, the elementary identities for odd powers involve additional derivatives or mixed
products, and a direct binomial expansion is no longer sufficient.  While partial results can be
obtained by the techniques of this paper, a complete explicit formula remains an open problem.

\vspace{6pt}
\noindent\textbf{Acknowledgments.}
This research was partially supported by the National Natural Science Foundation of China
under Grant No.\ 12571355.



\bibliographystyle{plain}

\end{document}